\newcommand{\tsk}[1]{\textcolor{YellowOrange}}
\def\@endtheorem{\endtrivlist}
\newtheorem{teo}{Theorem}[section]
\newtheorem{defin}[teo]{Definition}
\newtheorem{prop}[teo]{Proposition}
\newtheorem{cor}[teo]{Corollary}
\newtheorem{lemma}[teo]{Lemma}
\newtheorem{expectation}[teo]{Expectation}
\theoremstyle{definition}
\newtheorem{remark}[teo]{Remark}
\newtheoremstyle{dico}
 {\baselineskip}   
  {\topsep}   
  {}  
  {0pt}       
  {} 
  {.}         
  {5pt plus 1pt minus 1pt} 
  {}          
\theoremstyle{dico}
\numberwithin{equation}{section}
\newcommand{\ra}{\rightarrow}
\newcommand{\C}{\mathbb{C}}
\newcommand{\R}{\mathbb{R}}
\newcommand{\Zeta}{{\mathbb{Z}}}
\newcommand{\N}{{\mathbb{N}}}
\newcommand{\QQ}{{\mathbb{Q}}}
\newcommand{\meno}{^{-1}}
\newcommand{\alfa}{\alpha}
\newcommand{\vacuo}{\emptyset}
\newcommand{\La}{\Lambda}
\newcommand{\enf}{\emph}
\newcommand{\desudt}[1] []      {\dfrac {\mathrm {d} #1 }{\mathrm {dt}}}
\newcommand{\desudtzero}        {\desudt \bigg \vert _{t=0} }
\newcommand{\restr}[1]          {\vert_{#1}}
\newcommand{\Aut}{\operatorname{Aut}}
\newcommand{\End}{\operatorname{End}}
\newcommand{\lieg}{\mathfrak{g}}
\newcommand{\lieh}{\mathfrak{h}}
\newcommand{\liem}{\mathfrak{m}}
\newcommand{\Ad}{\operatorname{Ad}}
\newcommand{\om}{\omega}
\newcommand{\eps}{\varepsilon}
\renewcommand{\phi}{\varphi}
\newcommand{\lds}{\ldots}
\newcommand{\cd}{\cdot}
\newcommand{\sx}{\langle}
\newcommand{\lra}{\longrightarrow}
\newcommand{\ga}{\gamma}
\newcommand{\Ga}{\Gamma}
\newcommand{\id}{\operatorname{id}}
\newcommand{\Gl}{\operatorname{GL}}
\newcommand{\GL}{\operatorname{GL}}
\newcommand{\PP}{\mathbb{P}}   
\renewcommand{\phi}             {\varphi}
\newcommand{\sieg}{\mathfrak{S}}
\newcommand{\siegv}{\sieg(V,\om)}
\newcommand{\M}{\mathsf{M}}
\newcommand{\tM}{\widetilde{\mathsf{M}}}
\newcommand{\HE}{\mathsf{HE}}
\newcommand{\Tg}{\mathsf{T}_g}
\newcommand{\T}{\mathsf{T}}
\newcommand{\Mg}{\mathsf{M}_g}
\newcommand{\A}{\mathsf{A}}
\newcommand{\Ag}{\mathsf{A}_g}
\newcommand{\teich}{\mathscr{T}}
\newcommand{\Sl}                {\operatorname {SL}}
\newcommand{\SL}                {\operatorname {SL}}
\newcommand{\Sp}                {\operatorname {Sp}}
\renewcommand{\Im}              {\operatorname{Im}}
\newcommand{\grass}             { \operatorname{\mathbb {G}} }
\newcommand{\ag}{\mathsf{A}_g}
\newcommand{\Datum}{\Delta}
\newcommand{\datum}{{( G, \theta)}}
\newcommand{\ada}{\mathsf{P}_\Datum}
\newcommand{\md}{\mathsf{M}_\Datum}
\newcommand{\tmd}{\tM_\Datum}
\newcommand{\Map}{\operatorname{Map}}
\newcommand{\co}{\mathscr{C}}
\newcommand{\Aff}{\operatorname{Aff}}
\newcommand{\seff}{B}
\newcommand{\mihi}[1]{}
\newcommand{\prym}{\mathscr{P}}
\newcommand{\Nm}{\operatorname{Nm}}
\begin{document}

\author{Paola Frediani, Alessandro Ghigi, Irene Spelta}
\title[Infinitely many Shimura varieties]
  {Infinitely many Shimura varieties in the Jacobian locus for  $g \leq 4$}

\address{Universit\`{a} di Pavia}
\email{paola.frediani@unipv.it}
\email{alessandro.ghigi@unipv.it}
\email{irene.spelta01@ateneopv.it} 
\subjclass[2010]{14G35, 14H15, 14H40]
}

\thanks{The authors were partially supported by MIUR PRIN 2017
  ``Moduli spaces and Lie Theory'' ,  by MIUR, Programma Dipartimenti di Eccellenza
  (2018-2022) - Dipartimento di Matematica ``F. Casorati'',
  Universit\`a degli Studi di Pavia and by INdAM (GNSAGA).  }

\maketitle

\begin{abstract}
  We study families of Galois covers of curves of positive genus. It
  is known that under a numerical condition these families yield
  Shimura subvarieties generically contained in the Jacobian locus.
  We prove that there are only 6 families satisfying this condition,
  all of them in genus 2,3 or 4.  We also show that these families admit
  two fibrations in totally geodesic subvarieties, generalizing a
  result of Grushevsky and M\"oller.  Countably many of these fibres
  are Shimura. Thus the Jacobian locus contains infinitely many
  Shimura subvarieties of positive dimension of any $g  \leq 4$.
\end{abstract}

\tableofcontents{}

\section{Introduction}

Denote by $\M_g$ the moduli space of curves, by $\Ag$ the moduli space
of principally polarized abelian varieties and by $j : \M_g \ra \A_g$
the period map.  The \emph{Torelli locus} $\Tg$ is the closure of
$j(\Mg)$ in $\Ag$.  A \emph{special} or \emph{Shimura subvariety} of
$\A_g$ is by definition a Hodge locus for the tautological family of
principally polarized abelian varieties on $\A_g$.  A subvariety
$Z \subset \A_g$ is \emph{generically contained in} $j(\Mg)$ if
$Z \subset \T_g$ and and $ Z \cap j(\M_g) \neq \vacuo$.
\begin{expectation}[Coleman-Oort]
  For large $g$ there are no special subvarieties of positive
  dimension generically contained in $j(\Mg)$.
\end{expectation}
(See \cite{coleman,oort-can} and \cite{moonen-oort} for a thorough
survey.  See \cite{cfg,cfgp,dejong-zhang,fgp,fpp,fpi,
  fp,deba,gm1,gpt,hain,liu-yau-ecc,lz,moonen-special} for related
results.)

Shimura subvarieties
are \emph{totally geodesic}, i.e they are images of totally geodesic
submanifolds of the Siegel space, that we denote by $\sieg_g$.  More
precisely, by results of Mumford and Moonen an algebraic subvariety of
$\A_g$ which is totally geodesic is a Shimura subvariety if and only
if it contains a CM point, see
\cite{mumford-Shimura,moonen-linearity-1}.  While the notion of CM
point is arithmetic, the condition of being totally geodesic relates
to the locally symmetric geometry of $\A_g$ coming from the Siegel
space.  One expects the Torelli embedding to be very curved with
respect to this locally symmetric ambient geometry.  In particular
$\Tg$ should contain \emph{very few} totally geodesic subvarieties of
$\A_g$.

Yet there are at least \emph{some} Shimura (hence totally geodesic)
subvarieties contained in $\T_g$.  They are all in genus $g\leq 7$.
To describe them consider the following construction. Take a family of
Galois covers $C \ra C'=C/G$, where the genera $g(C')=g'$, $g(C) = g$,
the number of ramification points and the monodromy are fixed.  Let
$Z$ denote the closure in $\Ag$ of the locus described by $[JC]$ for
$C$ varying in the family.  The simple numerical condition
\begin{gather}
  \label{star}
  \tag{$\ast$} \dim (S^2 (H^0(K_C)))^G = \dim H^0(2K_C)^G
\end{gather}
is sufficient to ensure that $Z$ is Shimura \cite{fgp,fpp}.  Moonen
\cite{moonen-special} proved that when $g'=0$ and the group $G$ is
cyclic \eqref{star} is also necessary for $Z$ to be Shimura.  Mohajer
and Zuo \cite{mohajer-zuo-paa} extended this to the case where $g'=0$,
$G$ is abelian and the family is one--dimensional. In both cases the
authors also showed that condition \eqref{star}  holds only in the
known examples. These results are proved using methods from positive
characteristic.  A completely different Hodge theoretic argument was
given in \cite[Prop. 5.2]{cfg}, but only works for some of the
families of cyclic covers of $\PP^1$.  It is unknown whether
\eqref{star} is necessary in general for a family of covers to yield a
Shimura subvariety or whether other families exist which satisfy
\eqref{star}.

Shimura subvarieties via families of Galois covers of \emph{elliptic}
curves satisfying \eqref{star} were constructed in \cite{fpp}.  There
are 6 such families, but 4 yield Shimura subvarieties already gotten
using coverings of $\PP^1$.
In \cite{fpp}
also families of Galois covers over curves of genus $g'>1$ were
considered. No example was found, but it was shown that when
\eqref{star} holds then $g \leq 6 g' +1 $.

Our first result in this paper is the following:

\begin{teo}
  \label{main1}
  The only positive dimensional families of Galois covers
  $C \ra C'=C/G$ with $g' \geq 1$ and satisfying \eqref{star} are the
  6 families found in \cite{fpp}. In particular all of them have
  $g'=1$.
\end{teo}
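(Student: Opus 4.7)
The strategy is to turn $(\ast)$ into a finite combinatorial problem and then enumerate. For $\pi : C \to C' = C/G$ with branch locus $B = Q_1 + \cdots + Q_r \subset C'$ and ramification orders $m_1, \ldots, m_r$, a direct local calculation on the fixed locus of $G$ shows that the $G$-invariant part of $\pi_* K_C^{\otimes 2}$ is $K_{C'}^{\otimes 2}(B)$, hence by Riemann--Roch
$$\dim H^0(2K_C)^G = 3g' - 3 + r,$$
which is precisely the dimension of the corresponding Hurwitz space. On the other hand, the Chevalley--Weil formula gives the character $\chi_V$ of $G$ on $V := H^0(K_C)$ explicitly in terms of $g'$, the $m_i$, and a surjective monodromy homomorphism $\theta$ from the orbifold fundamental group of $(C',B)$ to $G$; the left-hand side of $(\ast)$ then follows from
$$\dim (S^2 V)^G = \frac{1}{2|G|}\sum_{g \in G}\bigl(\chi_V(g)^2 + \chi_V(g^2)\bigr).$$
Thus $(\ast)$ reduces to an explicit arithmetic identity in the datum $(g', G, m_1, \ldots, m_r, \theta)$.

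To pass from this identity to a finite problem, I would combine the bound $g \leq 6g' + 1$ from \cite{fpp} with Riemann--Hurwitz
$$2g - 2 = |G|\Bigl(2g' - 2 + \sum_{i=1}^r (1 - 1/m_i)\Bigr).$$
For $g' \geq 2$ this forces $|G|(2g'-2) \leq 12 g'$, i.e.\ $|G| \leq 6g'/(g'-1) \leq 12$, with corresponding bounds on $r$ and on each $m_i \leq |G|$. For $g' = 1$ one has $g \leq 7$, so $|G|\sum_i(1 - 1/m_i) \leq 12$; since $1 - 1/m_i \geq 1/2$, this yields $|G|\, r \leq 24$, and again $m_i \leq |G|$. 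In both regimes only finitely many signatures $(g', G, m_1, \ldots, m_r)$ remain.

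The last step is the enumeration itself: for each admissible signature list all isomorphism classes of groups $G$ of the allowed order (for instance from the Small Groups library in \textsc{Magma}), then all surjections $\theta$ from the orbifold fundamental group up to the action of $\Aut(G)$, and finally test the character-theoretic form of $(\ast)$ against the target value $3g' - 3 + r$. The expected outcome is that the only solutions are the six signatures already exhibited in \cite{fpp}, all of them with $g' = 1$; in particular no family with $g' \geq 2$ arises. The principal obstacle is exhaustiveness: for $g' = 1$ the bound $|G|\, r \leq 24$ still permits a nontrivial list of monodromies that effectively requires computer enumeration, whereas for $g' \geq 2$ the list degenerates so rapidly (e.g.\ $|G| \leq 9$ already for $g' = 3$) that it can be ruled out by hand.
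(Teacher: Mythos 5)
Your reduction works for $g'=1$, but for $g'\geq 2$ it does not actually yield a finite list: your bounds control $|G|$ (indeed $|G|\leq 6g'/(g'-1)\leq 12$), and then $r$ and the $m_i$ \emph{for each fixed} $g'$, but nothing in your argument bounds $g'$ itself. The inequalities $g\leq 6g'+1$ and Riemann--Hurwitz are perfectly compatible with, say, $G=\mathbb{Z}/2$ and arbitrarily large $g'$, so the assertion that ``only finitely many signatures remain'' is unjustified and the proposed enumeration cannot terminate. This missing bound is precisely the new ingredient the paper supplies before handing the problem to the computer (Theorem \ref{bau1}): condition $(\ast)$ together with Max Noether's theorem (plus a separate argument in the hyperelliptic case) forces the multiplication map $(S^2H^0(K_C))^G\to H^0(2K_C)^G$ to be an \emph{isomorphism} at a generic point of the family; its restriction to the summand $S^2H^0(K_{C'})\subset(S^2H^0(K_C))^G$ is therefore injective with image inside $H^0(2K_{C'})$, whence $g'(g'+1)/2\leq\dim H^0(2K_{C'})=3g'-3$ for $g'\geq 2$, i.e.\ $g'\leq 3$. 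You would need this (or some substitute, e.g.\ a Chevalley--Weil lower bound on $\dim(S^2H^0(K_C)^-)^G$ that grows quadratically in $g'$) before your combinatorial search makes sense.

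Two smaller remarks. Your identity $\dim H^0(2K_C)^G=3g'-3+r$ and the character-theoretic evaluation of the left-hand side of $(\ast)$ are exactly how the paper (following \cite{fpp}) runs the computer check, so that part of the plan matches. Also, once $g'\leq 3$ is known, the paper disposes of the \'etale coverings by a short representation-theoretic argument rather than by enumeration (Lemma \ref{etale}: $(\ast)$ would force $(S^2H^0(K_C)^-)^G=0$, which fails whenever $G$ has a nontrivial $1$-dimensional character, and all groups of the relevant orders $\leq 12$ have one); your scheme would instead treat these cases by brute force, which is fine but only after $g'$ has been bounded.
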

See Theorem \ref{bau3}.  This shows that condition \eqref{star} is
very strong when $g' >0$.  Of course the moduli image of some family
could be a Shimura subvariety even if \eqref{star} does not hold.

To prove Theorem \ref{main1} we first prove that $g' \leq 3$ (Theorem
\ref{bau1}). This reduces the problem to the analysis of a finite
number of cases.  The \'etale covers are ruled out using some
elementary representation theory (Lemma \ref{etale}).  The ramified
cases are checked by a computer program as in \cite{fpp}.

One of the 2 families of coverings over elliptic curves found in
\cite{fpp} had been studied previously by Pirola \cite{pirola-Xiao} in
order to disprove a conjecture of Xiao.  This family was also studied
by Grushevsky and M\"oller \cite{gm2}, who got the following
remarkable result: the Prym map for this family is a fibration in
curves, which are totally geodesic. As consequence they obtained
uncountably many totally geodesic curves generically contained in
$\T_4$, countably many of which are Shimura.

In this paper we show that this phenomenon appears for all the Shimura
subvarieties found in \cite{fpp}:
\begin{teo}
  \label{main2}
  Consider a positive dimensional family of Galois covers $C \ra C'=C/G$ with $g' \geq 1$
  and satisfying \eqref{star} (i.e. one of the 6 families in
  \cite{fpp}). If $F$ is an irreducible component of a fibre of the
  Prym map, then $W:= \overline{j(F)}$ is a totally geodesic
  subvariety of $\Ag$ of dimension $g' (g'+1) /2$.
\end{teo}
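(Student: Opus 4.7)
The plan is to exploit the product structure of the Shimura variety $Z$ (the closure in $\A_g$ of the moduli image of the family) induced by the $G$-equivariant isogeny decomposition of the Jacobians. By $(\ast)$, $Z$ is a Shimura subvariety. The $G$-action on each cover produces a $G$-equivariant isogeny $JC \sim JC' \times P$, where $P$ is the Prym variety, corresponding to the decomposition $H^1(C,\QQ) = H^1(C',\QQ) \oplus H^1(C,\QQ)_{\mathrm{new}}$ into trivial and non-trivial $G$-isotypic parts. First I would show that the uniformizing Hermitian symmetric domain $D$ of $Z$ splits as $D \cong D_0 \times D_1$, where $D_0 = \sieg_{g'}$ parametrizes the Hodge structures on $H^1(C')$ and $D_1$ parametrizes those on the Prym part. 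This uses that the generic Mumford--Tate group commutes with $G$ and hence preserves the isotypic decomposition; together with $(\ast)$, which identifies the tangent space to $Z$ at a generic point with $(S^2 H^{1,0}(C))^G$, it forces the action on each factor to be maximal, producing the desired product.

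Next I would identify the restriction of the Prym map to $Z$ with the projection $p_1 : D \to D_1$ modulo the relevant arithmetic group. A fibre of $p_1$ is a copy of $D_0 = \sieg_{g'}$ embedded totally geodesically in $D$. Since the inclusion $D \hookrightarrow \sieg_g$ is itself totally geodesic (as $Z$ is Shimura), the fibre descends to a totally geodesic subvariety $W' \subset \A_g$ of complex dimension $\dim \sieg_{g'} = g'(g'+1)/2$. By construction $j(F) \subset W'$, and hence $W = \overline{j(F)} \subseteq W'$.

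The main obstacle is the reverse inclusion $W' \subseteq W$, i.e. that $j(F)$ is Zariski dense in $W'$. This reduces to checking that the assignment $(C \to C') \mapsto [JC']$ is dominant on $F$. Since the non-trivial isotypic VHS is captured, up to isogeny, by $P$, varying the cover with $P$ fixed amounts to varying $C'$ in an associated family of covers, and one must verify that the induced map $F \to \A_{g'}$ has dense image. For the six families in \cite{fpp}, all with $g'=1$, this reduces to showing that the base elliptic curve varies in $\M_1 = \A_1$, which may be checked directly in each case, in the spirit of the argument of Grushevsky--M\"oller \cite{gm2} for the $g=4$ family.
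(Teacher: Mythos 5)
Your route is genuinely different from the paper's, and as written it has two real gaps. The paper does not attempt to realize the Prym map as a projection of a product decomposition of the uniformizing domain of $Z=\overline{j(\M_\Datum)}$. Instead it (i) shows from \eqref{star}, via Noether's theorem and Schur's lemma, that $\prym$ is generically submersive, so that \emph{every} fibre has dimension $\geq \dim\tmd-\dim\ada=g'(g'+1)/2$; (ii) observes that every $JC$ with $[C]\in F$ is isogenous to $A\times JC'$ with $A$ fixed, i.e.\ to a point of $W'=\{A\}\times\A_{g'}\subset\A_g^D$ (a totally geodesic subvariety of dimension $g'(g'+1)/2$, using $g'\leq 3$ so that $\overline{j(\M_{g'})}=\A_{g'}$); and (iii) invokes a general transfer principle (Theorem \ref{inag}, proved by a Baire-category plus identity-theorem argument for the graphs of $\Ad a$, $a\in\Gl(\La_\QQ)$, inside the non-Riemannian symmetric space $\co(V)$ of all complex structures) which says that if the generic point of $W$ is isogenous to a point of an equidimensional totally geodesic $W'$, then $W$ is totally geodesic. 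This deliberately sidesteps everything you are trying to do: indeed the authors remark after Theorem \ref{ribau} that upgrading the isogeny between $Z$ and $\A_1\times\ada$ to one compatible with the two fibrations $\phi$ and $\prym$ ``is probably non-trivial'' and is left open.

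Concretely, the gaps are these. First, the splitting $D\cong\sieg_{g'}\times D_1$ compatible with $\prym$ is not established by the observation that the Mumford--Tate group commutes with $G$: a priori a simple factor of the adjoint group could act nontrivially on both the invariant part $H^1(C')$ and the anti-invariant part, and what is really needed is the full PEL description of $Z$ from Theorem \ref{criterio0} together with careful lattice and polarization bookkeeping ($JC$ is only \emph{isogenous} to $JC'\times P$, so your claim ``$j(F)\subset W'$'' depends on exactly how $W'$ is defined inside $\A_g$, and you must also show that the image in $\A_g$ of a fibre of $p_1$ is a \emph{closed algebraic} subvariety, which is part of Definition \ref{defE}). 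Second, and more seriously, the density step $W'\subseteq W$ is the crux and your plan for it is not executable as stated: dominance of $[C\to C']\mapsto[JC']$ on all of $\tmd$ (which the paper proves uniformly from \eqref{star} via the injectivity of the multiplication map on $S^2H^0(K_{C'})$) does not imply its dominance on a fixed, possibly non-generic, fibre $F$ of $\prym$, and a case-by-case inspection of the six families does not obviously control non-generic fibres either. The uniform way to close this is the lower bound $\dim F\geq g'(g'+1)/2$ coming from the generic submersivity of $\prym$ -- which is precisely where \eqref{star} enters and which your proposal never establishes -- combined with an upper bound $\dim W\leq g'(g'+1)/2$; for the latter you would be back to needing either your unproven inclusion $W\subseteq W'$ or the paper's isogeny argument.
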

(See Theorem \ref{bau} for the precise statement.)  In particular also
$\T_2$ and $\T_3$ contain uncountably many totally geodesic curves and
countably many Shimura curves.

Next we study the map $\phi$ that maps the covering $[C\ra C' ] $ to
$ [JC']$. Then $\phi$ is a fibration of these families onto $\A_{g'}$.

\begin{teo}
  \label{main3}
  Consider a  family of Galois covers $C \ra C'=C/G$ with $g' \geq 1$
of dimension $N>0$  and satisfying \eqref{star} (i.e. one of the 6 families in
  \cite{fpp}). If $F$ is an irreducible component of a fibre of $\phi$,
  then $X:= \overline{j(F)}$ is a totally geodesic subvariety of $\Ag$
  of dimension $ N - g' (g'+1) /2 $.
\end{teo}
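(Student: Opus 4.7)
The approach parallels the proof of Theorem \ref{main2}, interchanging the roles of the two factors of a natural product decomposition. By condition \eqref{star}, the closure $Z := \overline{j(U)}$ (where $U$ denotes the base of the family) is a Shimura subvariety of $\A_g$ of dimension $N$, hence totally geodesic. Since each irreducible component of $X = \overline{j(F)}$ is contained in $Z$ and total geodesy is transitive, it suffices to prove that such a component is totally geodesic in $Z$.

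The central structural input is the $G$-equivariant isogeny decomposition $JC \sim JC' \times P$, where $P$ is the Prym variety of the cover. This decomposition is locally constant on $U$, hence monodromy invariant, so the generic Mumford-Tate group of the associated variation of Hodge structure splits into a subgroup of a product $H_1 \times H_2$: the first factor governs the $G$-invariant summand (corresponding to $JC'$) and the second governs the Prym summand. At the level of period domains, the Hermitian symmetric domain $D_Z$ covering $Z$ embeds into $\sieg_{g'} \times \sieg_{g-g'}$, sitting block-diagonally inside $\sieg_g$. Granted that this embedding is in fact a product decomposition $D_Z = D_1 \times D_2$ with $D_1 \cong \sieg_{g'}$ and $\dim D_2 = N - g'(g'+1)/2$, the map $\phi$ corresponds under the period map to the projection $D_Z \to D_1$; its fibres lift to the totally geodesic slices $\{p\} \times D_2 \subset D_Z$, and the theorem follows upon taking quotients by $\Sp_{2g}(\Zeta)$.

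To establish the product structure I would combine two ingredients. First, Theorem \ref{main2} already exhibits a totally geodesic foliation of $Z$ whose leaves have dimension $g'(g'+1)/2 = \dim \sieg_{g'}$; these are the images of the Prym fibres and they identify $D_1$ with a totally geodesic copy of $\sieg_{g'}$ inside $D_Z$. Second, the hypothesis that $\phi$ is a fibration of the family onto $\A_{g'}$ forces the projection $D_Z \to \sieg_{g'}$ to be surjective. Together these upgrade the inclusion $D_Z \subset D_1 \times D_2$ to an equality of Hermitian symmetric domains, which is exactly what is needed.

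The main obstacle is precisely this product splitting: the monodromy argument alone yields only the inclusion $H \subset H_1 \times H_2$, and one must exclude diagonal-type restrictions coupling the two factors. In principle one could verify this case by case for the six families in \cite{fpp} (all having $g'=1$, so $\sieg_{g'}$ is one-dimensional and the check is elementary), but the argument above derives it uniformly from Theorem \ref{main2} together with the surjectivity of $\phi$, yielding a cleaner and more conceptual proof.
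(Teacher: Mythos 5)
Your proposal takes a genuinely different route from the paper, but the route as written has a gap at its central step. The paper never establishes (and in fact, in the Remark following Theorem \ref{ribau}, explicitly flags as non-trivial and open) the compatibility of the product structure on $Z$ with the pair of maps $(\phi,\prym)$ that your argument requires. Instead, the paper argues fibre by fibre: along a fixed component $F$ of $\phi\meno(y)$ one has $JC \sim JC' \times P(f)$ with $JC'$ constant and $P(f)\in \ada$, so every point of $j(F)$ is isogenous to a point of the explicit totally geodesic locus $X'=\overline{\{j(C')\}\times \ada}$; a dimension count (surjectivity of $d\phi$ at the generic point, which comes from the multiplication map \eqref{poldo} being an isomorphism under \eqref{star}) gives $\dim X \geq \dim X' = N - g'(g'+1)/2$, and then Theorem \ref{inag} -- the Baire-category isogeny-rigidity statement proved in the space $\co(V)$ of all complex structures -- forces $\dim X = \dim X'$ and transports total geodesy from $X'$ to $X$. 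No global splitting of $D_Z$ is ever needed.

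The concrete gap in your version: the two ingredients you cite do not yield the splitting $D_Z = D_1\times D_2$. Surjectivity of both projections of a Hermitian subdomain $D_Z\subset D_1\times D_2$ does not imply a product (the diagonal in $\sieg_1\times\sieg_1$ surjects onto both factors), and the existence of totally geodesic Prym leaves of dimension $g'(g'+1)/2$ only constrains the fibres of $\pi_2$, not the global structure. What would actually close the argument is (i) identifying $D_2$ as the domain of the Prym--Shimura variety $\ada$ and proving $\dim \ada = N - g'(g'+1)/2$ -- this is exactly the computation via \eqref{W} and the isomorphism \eqref{riemme}, which you never perform, so your assertion ``$\dim D_2 = N-g'(g'+1)/2$'' is the conclusion restated rather than an input -- and (ii) the bookkeeping between the principal polarization on $JC'$ and the polarization induced on the sub-abelian variety $H^1(C)^G$ of $JC$, which is why the paper works with isogenies in $\Gl(\La_\QQ)$ and the non-Riemannian ambient $\co(V)$ rather than inside a single Siegel space. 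With (i) in hand one could run a cleaner argument ($D_Z$ closed of full dimension in the connected product $\sieg_{g'}\times D_{\ada}$, hence equal to it), but as submitted the key step is asserted, not proved; your own closing paragraph identifies it as ``the main obstacle'' and the proposed resolution does not overcome it.
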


Also in this case countably many of the irreducible components of the
fibres of $\phi$ are Shimura subvarieties.

In light of Theorem \ref{main1}, Theorems \ref{main2} and \ref{main3}
concern just 6 explicit families of coverings.  Nevertheless it seems
hard to prove them by a direct analysis of the families. Our approach
is different, we prove our results using some general arguments on
isogenies and totally geodesic subvarieties that are explained in
Section 3. These are of independent interest.

In the last part of the paper we analyse several features of the
examples in relation with the two fibrations $\prym$ and $\phi$.  We
describe all the inclusions among the families of Galois covers of
$\PP^1$ or of elliptic curves yielding Shimura subvarieties of $\A_g$
known so far. We show that some of them occur as irreducible
components of fibres of the Prym map or are contained in fibres of the
map $\phi$ of one of the 6 families in \cite{fpp}.
\medskip

{\bfseries \noindent{Acknowledgements}}.  The authors would like to
thank the referee for a very careful reading of the paper.

\section{Families of coverings with $g'>0$}

For $r\geq 0$ and $g'\geq 1$ set
$\Ga_{g',r}:= \sx \alfa_1, \beta_1, \lds, \alfa_{g'}, \beta_{g'},
\ga_1, \lds, \ga_r| $
$\prod_{i=1}^r \ga_i\cd \prod_{j=1}^{g'} [\alfa_j, \beta_j] \rangle$.
A \emph{datum} is a pair $\Datum:=(G, \theta) $, where $G$ is a finite
group and $\theta: \Ga_{g',r} \ra G$ is an epimorphism. We denote by
$m_j$ the order of $\theta(\ga_j)$.  After making some choices, one
can associate to a datum $(G,\theta)$ a monomorphism from $G$ to the
mapping class group $ \Map_g$, where $g$ depends on $g'$ and
$m_1, \lds, m_r$ via Riemann-Hurwitz formula.  Denote by $\teich_g$
the Teichm\"uller space and by $\teich^G_{g}$ the set of fixed points
of $G$, which is a non-empty connected complex submanifold of
$\teich_g$ of dimension $3g' -3 + r$.  The image of $\teich_g^G$ in
$\M_g$, which we denote by $\M_\Datum$, depends only on the datum, not
on the choices made. It is an algebraic subvariety of dimension
$3g' -3 + r$.  As explained in \cite[p. 79]{gab} there is an
  intermediate variety $\tmd$ such that the projection factors through
  \begin{gather}
    \label{norma}
    \teich^G_g \lra \tmd \stackrel{\nu}{\lra} \md.
  \end{gather}
  The variety $\tmd $ is the normalization of $\md$ and it
  parametrizes equivalence classes of curves with an action of $G$ of
  topological type $\Datum$.  (We denote points in $\tmd$ by
  $[C \ra C'=C/G]$ or simply by $[C]$.)  The equivalence class of the
  representation of $G$ on $H^0(C,K_C)$ does not change for $[C] $
  varying in $ \tmd$. Therefore the number
\begin{gather*}
  N (\Datum): = \dim \left ( S^2H^0(C,K_{C})\right )^G
\end{gather*}
depends only on the datum.  See \cite[\S 2]{fpp} and references
therein for more details on this construction.
  
\begin{teo}
  \label{criterio0}
  Fix a datum $\Datum=\datum$ and assume that
  \begin{gather}
    \label{bona}
    \tag{$\ast$} N (\Datum) = 3g'-3 +r.
  \end{gather}
  Then $\overline{j (\M_\Datum)} $ (closure in $\A_g$) is a special
  subvariety of PEL type of $\ag$ that is generically contained in the
  Torelli locus.
\end{teo}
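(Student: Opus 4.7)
The plan is to exhibit explicitly a PEL Shimura subvariety $Z\subset \A_g$ that contains $j(\M_\Datum)$, and then use the numerical condition $(\ast)$ to conclude $\overline{j(\M_\Datum)}=Z$ by a dimension count.

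Fix a point $[C_0]\in \M_\Datum$. The $G$-action on $C_0$ endows $H^1(C_0,\QQ)$ with the structure of a polarized $\QQ[G]$-module of Hodge structures, and the isomorphism class of this $\QQ[G]$-module is constant on $\M_\Datum$ (as recalled in the excerpt). Let $H\subset \operatorname{GSp}(H^1(C_0,\QQ))$ be the centralizer of the image of $\QQ[G]$ and let $X$ be the $H(\R)$-conjugacy class of the Hodge cocharacter attached to $JC_0$. The pair $(H,X)$ is a Shimura datum of PEL type, and the associated connected Shimura variety maps naturally to $\A_g$ with image a PEL special subvariety $Z$ parametrizing principally polarized abelian varieties whose rational Hodge structure carries a $G$-action of the fixed isomorphism type. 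Functoriality of the Jacobian construction gives $j(\M_\Datum)\subset Z$.

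Next, I would compute $\dim Z$. The infinitesimal period map identifies $T_{[JC_0]}\A_g$ with the dual of $S^2 H^0(C_0,K_{C_0})$, and the horizontal deformations along $Z$ at $[JC_0]$ are exactly the $G$-equivariant ones; hence $T_{[JC_0]}Z$ is the dual of $(S^2 H^0(C_0,K_{C_0}))^G$, so $\dim Z = N(\Datum)$. On the other hand $\M_\Datum$ has dimension $3g'-3+r$, as it is the image of $\teich_g^G$, and by the Torelli theorem $j|_{\M_\Datum}$ is injective, so $\dim \overline{j(\M_\Datum)} = 3g'-3+r$. Hypothesis $(\ast)$ then yields $\dim \overline{j(\M_\Datum)} = \dim Z$. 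Since $\overline{j(\M_\Datum)}$ is irreducible and sits inside the irreducible component of $Z$ through $j([C_0])$, the two must coincide, proving that $\overline{j(\M_\Datum)}$ is a PEL special subvariety of $\A_g$. Generic containment in the Torelli locus is the non-empty inclusion $j(\M_\Datum)\subset Z\cap j(\M_g)$.

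The main technical obstacle is the correct identification of $T_{[JC_0]}Z$ with the dual of $(S^2 H^0(K_{C_0}))^G$: one must check that the tangent directions to the PEL Shimura subvariety at the Jacobian point are exactly those preserving the $G$-eigenspace decomposition of $H^{1,0}(JC_0)$, and not merely the underlying $\QQ[G]$-module structure of $H^1(C_0,\QQ)$. This requires the explicit description of the Griffiths horizontal tangent bundle of a PEL Shimura datum together with the $G$-equivariant Kodaira--Spencer map, and it is precisely the step carried out in \cite{fgp,fpp}.
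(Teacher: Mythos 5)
Your proposal is correct and is essentially the argument the paper relies on: the paper gives no proof of its own but cites \cite[Thm.~3.9]{fgp} and \cite[Thm.~3.7]{fpp}, and your construction of the PEL datum from the centralizer of $\QQ[G]$ in $\operatorname{GSp}$, the identification $T_{[JC_0]}Z\cong \bigl((S^2H^0(K_{C_0}))^G\bigr)^*$, and the dimension count against $\dim\M_\Datum=3g'-3+r$ is exactly the route taken there. The one step you correctly flag as the technical crux --- that the tangent space of the PEL locus at $[JC_0]$ is the $G$-invariant part of $S^2H^0(K_{C_0})^*$ via the equivariant Kodaira--Spencer map --- is indeed the content of those references, so nothing is missing.
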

(See \cite[Thm. 3.9]{fgp} and \cite[Thm. 3.7]{fpp}.)  In principle
condition \eqref{bona} is only sufficient for $\overline{j(M_\Datum)}$
to be special. It is known to be necessary if $G$ is cyclic
\cite{moonen-special} or if $G$ is abelian, $g'=0$ and $r=4$
\cite[Thm. 6.2]{mohajer-zuo-paa}.  It is not known whether it is
necessary in the general case.
  
In the following we concentrate on the case where $g'\geq 1$.  A first
result was proved in \cite[Thm. 4.11]{fpp}.
\begin{teo}
  If $\Datum=\datum$ is a datum with $g'\geq 1$, $3g' + r > 3 $
  (i.e. $\dim \M_\Datum > 0 $) and which satisifies condition
  \eqref{bona}, then $g \leq 6g'+1$.  In particular, for $g \geq 8 $,
  (resp. $g\geq 14$), there is no positive-dimensional datum with
  $g' =1$, (resp. $g'=2$) and which satisfy condition $(*)$.
\end{teo}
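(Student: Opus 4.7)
The plan is to split $H^0(C,K_C)$ as a $G$-module into its invariant and its non-trivial parts, reformulate condition $(\ast)$ using only the non-trivial piece, and then derive a representation-theoretic lower bound which forces $g \leq 6g'+1$.

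First, since $H^0(C,K_C)^G = \pi^* H^0(C',K_{C'})$ has dimension $g'$, one may write
\begin{equation*}
H^0(C,K_C) = \pi^* H^0(C',K_{C'}) \oplus W,
\end{equation*}
with $W$ the sum of the non-trivial isotypic components of $G$, so that $\dim W = g - g'$. The mixed summand $\pi^* H^0(C',K_{C'}) \otimes W$ in $S^2 H^0(C,K_C)$ has no trivial sub-representation, hence
\begin{equation*}
N(\Datum) = \binom{g'+1}{2} + \dim(S^2 W)^G.
\end{equation*}
By Serre duality and the standard identification of the tangent space $T_{[C]}\md = H^1(C,T_C)^G$, we have $\dim H^0(C,2K_C)^G = 3g'-3+r$, so condition $(\ast)$ becomes
\begin{equation*}
\dim (S^2 W)^G = 3g' - 3 + r - \binom{g'+1}{2}.
\end{equation*}

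The decisive ingredient is a lower bound for $\dim (S^2 W)^G$ in terms of $\dim W = g-g'$. Decomposing $W = \bigoplus_{\sigma \neq \mathbf{1}} \sigma^{m_\sigma}$ over the non-trivial complex irreducible $G$-representations, one has the Frobenius--Schur-type formula
\begin{equation*}
\dim (S^2 W)^G
= \sum_{\substack{\sigma \cong \sigma^* \\ \varepsilon_\sigma = +1}} \binom{m_\sigma+1}{2}
+ \sum_{\substack{\sigma \cong \sigma^* \\ \varepsilon_\sigma = -1}} \binom{m_\sigma}{2}
+ \sum_{\substack{\{\sigma,\sigma^*\} \\ \sigma \neq \sigma^*}} m_\sigma m_{\sigma^*}.
\end{equation*}
To bound the right-hand side from below, I would invoke the Chevalley--Weil formula, which expresses each $m_\sigma$ as an explicit linear combination of a $(g'-1)\dim\sigma$ term and fractional ramification contributions determined by $\chi_\sigma(\theta(\gamma_j))$. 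Combined with the fact that $W \oplus \bar W$ is the complexification of a rational $G$-module (relating $m_\sigma$ and $m_{\sigma^*}$), a case analysis on the Frobenius--Schur type then yields an inequality of the shape
\begin{equation*}
\dim (S^2 W)^G \geq \tfrac{1}{2}(g-g') - a(g'),
\end{equation*}
for an affine function $a$ of $g'$.

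Combining this with the equality above and Riemann--Hurwitz applied to the cover $C \to C'$ (together with $|G|\geq 2$) yields the bound $g \leq 6g'+1$. The special cases $g'=1,\ g\geq 8$ and $g'=2,\ g\geq 14$ follow immediately.

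The main obstacle is the representation-theoretic lower bound: a naive estimate fails because a non-self-dual $\sigma$ may occur with $m_\sigma > 0$ and $m_{\sigma^*} = 0$, so that it contributes nothing to the third sum. Ruling out such asymmetric configurations whenever $g - g' > 5g'+1$, via the explicit Chevalley--Weil multiplicities, is the crux of the argument.
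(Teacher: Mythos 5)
The paper does not actually prove this theorem: it is imported verbatim from \cite[Thm.~4.11]{fpp}, so there is no internal argument to compare yours against; the setup you give (the splitting $H^0(K_C)=\pi^*H^0(K_{C'})\oplus W$, the identity $N(\Delta)=\binom{g'+1}{2}+\dim(S^2W)^G$, and the Frobenius--Schur formula) is correct and is the standard starting point. The gap lies in the two steps you leave unproved, and the second of these is not merely missing but structurally unworkable. On the lower bound itself: besides the non-self-dual configurations you flag as ``the crux,'' there is a failure mode you do not address at all, namely an orthogonal irreducible $\sigma$ of large dimension $D$ occurring with multiplicity $m_\sigma=1$: it contributes $\binom{2}{2}=1$ to $\dim(S^2W)^G$ but $D$ to $g-g'$, so no inequality of the shape $\dim(S^2W)^G\geq\tfrac12(g-g')-a(g')$ can hold without first controlling $\dim\sigma$ against $m_\sigma$. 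Chevalley--Weil gives $m_\sigma\geq(g'-1)\dim\sigma$, which is vacuous exactly in the case $g'=1$ where all six families live.

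More seriously, even granting your linear lower bound, the final combination does not close. Condition $(\ast)$ gives $\dim(S^2W)^G=3g'-3+r-\binom{g'+1}{2}$, so your bound yields $g\leq 7g'-6-g'(g'+1)+2a(g')+2r$, and Riemann--Hurwitz only gives $r\leq\frac{4g-4}{|G|}-4g'+4$; substituting, the coefficient of $g$ on the right-hand side is $8/|G|$, so you obtain an upper bound on $g$ only when $|G|\geq 9$ and nothing at all for small groups. The bielliptic case $|G|=2$, $g'=1$ shows what is actually needed: there $r=2g-2$, $W=\chi^{\oplus(g-1)}$ for the unique nontrivial character, and $(\ast)$ reads $\binom{g}{2}=2g-3$, which forces $g\in\{2,3\}$ (families (1e) and (2e)) precisely because $\binom{m_\sigma+1}{2}$ grows \emph{quadratically} in the multiplicity while the right-hand side grows linearly in $r$. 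Any lower bound for $\dim(S^2W)^G$ that is linear in $g$ with a fixed slope is therefore too weak in principle; the correct argument must play the quadratic terms $\binom{m_\sigma\pm 1}{2}$, $m_\sigma m_{\sigma^*}$ (large when few irreducibles occur) against the Riemann--Hurwitz bound on $r$ (strong when $|G|$, hence the number of irreducibles, is large), and your sketch does not contain this balancing mechanism.
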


Fix a datum $\Datum=\datum$ and a point of $ \tmd$. This point
represents the isomorphism class of a curve $C$ of genus $g$, which
admits an effective holomorphic action of $G$. Denote by $C':= C/G$
the quotient, which has genus $g'$ and by $f: C \ra C'$ the quotient
map. Then $f$ is a Galois covering with branch locus $B$.  The
unramified covering $f\restr{f\meno(C'-B)}$ is the $G$-cover
associated to the epimorphism
$\theta: \Ga_{g',r}\cong \pi_1(C'-B) \ra G$.  If $g'\geq 1$, we can
consider the norm map $\Nm : JC \ra JC'$ and the Prym variety
$P (f) := (\ker \Nm)^0$ of the covering $f: C \ra C'$.  Call $\delta$
the type of the polarization obtained by restricting the theta divisor
of $JC$ to $P(f)$.  Denote by $\A^{\delta}_{g-g'}$ the moduli space of
abelian varieties of dimension $g-g'$ with a polarisation of type
$\delta$.  Since $f$ and $\Nm$ are $G$-invariant, $P(f)$ is
$G$-invariant, thus $G$ is a group of automorphisms of $P(f)$ as a
polarized abelian variety.  Denote by
\begin{equation*}
  \ada  \subset \mathsf{A}^\delta_{g-g'}
\end{equation*}
the Shimura variety parametrizing abelian varieties with an action of
$G$ of the same type as $P(f)$. This variety is constructed as in
\cite[\S 3]{fgp}.  Next denote by
\begin{gather*}
  \prym: \tmd \ra \ada
\end{gather*}
the Prym map that associates to $[C] \in \tmd$ the isomorphism class
of $(P(f),$ $ \Theta\restr{P(f)})$.  If $g'=0$, then $\prym$ is just
the Torelli morphism composed with $\nu$ in \eqref{norma}, so we are
just in the setting of Theorem \ref{criterio0}, which in fact asserts
that under condition (\ref{bona}) we have $\overline{j(\md) } =
\ada$. Instead, when $g'\geq 1$, the Prym map gives rise to some
additional geometry.
\begin{teo}
  \label{bau1}
  Consider a datum $\Datum=\datum$ with $g'\geq 1$, $3g' + r > 3 $
  (i.e. $\dim\md >0$), and which satisifies condition
  \eqref{bona}. Then $g' \leq 3$ and $\prym$ is dominant.
\end{teo}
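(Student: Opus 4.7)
The plan is to study the two natural projections associated to the PEL Shimura variety $Z := \overline{j(\md)}\subset\ag$.  By Theorem~\ref{criterio0}, condition~\eqref{bona} forces $Z$ to be the full PEL Shimura variety parametrizing polarized abelian varieties of dimension $g$ with $G$-action of the type attained on $JC$ for $[C]\in\md$; in particular $\dim Z = \dim\md = 3g'-3+r$.  The $G$-equivariant isogeny decomposition $A \sim A^G \times A^{\mathrm{ni}}$, which separates the trivial isotypic part of $H^1(A,\R)$ (of $\R$-dimension $2g'$) from its complement, induces two dominant morphisms of Shimura varieties: a projection $p : Z \to \A_{g'}$ remembering the invariant factor $A^G$ (up to a polarization rescaling), and a projection $q : Z \to \ada$ remembering the Prym factor $A^{\mathrm{ni}}$.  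Both are dominant because the full PEL Shimura variety imposes no constraints on either factor beyond the polarization type and, for $q$, the $G$-action type.

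The key observation is that on $j(\md)\subset Z$ these projections coincide with the geometric maps attached to the family.  Indeed $(JC)^G = f^*JC'$ is isogenous to $JC'$, so $p \circ j$ sends $[C\ra C']$ to the class of an abelian variety isogenous to $JC'$ which, after polarization rescaling, lies in the Torelli locus $\T_{g'}\subseteq\A_{g'}$.  Similarly, the non-invariant isogeny factor of $JC$ is by definition the Prym variety $P(f)$, so $q\circ j=\prym$.  Both assertions of the theorem then follow by a density argument: since $j(\md)$ is Zariski-dense in $Z$ and $p$ is dominant, $p(j(\md))$ is Zariski-dense in $\A_{g'}$; but this image is contained in the closed subvariety $\T_{g'}\subseteq\A_{g'}$, whence $\T_{g'}=\A_{g'}$, which forces $g'\leq 3$.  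The same argument with $q$ in place of $p$ yields that $\prym(\md)=q(j(\md))$ is Zariski-dense in $\ada$, so $\prym$ is dominant.

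The main obstacle I anticipate is setting up $p$ rigorously and identifying $p\circ j$ with the class of $JC'$ in $\A_{g'}$: the subvariety $(JC)^G\subset JC$ differs from $JC'$ by the finite kernel of $f^*$, and the polarization induced by $\Theta_{JC}$ differs from the principal one on $JC'$ by a factor of $|G|$.  This is essentially a bookkeeping matter, handled by unpacking the construction of $\ada$ and of the PEL Shimura variety $Z$ recalled from \cite[\S 3]{fgp}.
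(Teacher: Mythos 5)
Your route is genuinely different from the paper's. The paper argues infinitesimally: it identifies the codifferential of $\prym$ at $[C]$ with the restriction of the multiplication map $m\colon S^2H^0(K_C)\to H^0(2K_C)$ to $(S^2H^{0}(K_C)^-)^G$, uses Noether's theorem (with a separate analysis when $\M_\Datum$ lies in the hyperelliptic locus) together with Schur's lemma and \eqref{bona} to show that $m$ restricted to $(S^2H^{0}(K_C))^G$ is an isomorphism onto $H^0(2K_C)^G$, whence $\prym$ is generically submersive; the bound $g'\leq 3$ then falls out because the same isomorphism forces $S^2H^0(K_{C'})$ to inject into $H^0(2K_{C'})$, i.e. $g'(g'+1)/2\leq\max(3g'-3,1)$. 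Your global argument, via the product structure of the PEL Shimura variety $Z=\overline{j(\md)}$ and the density of $j(\md)$ in it, is a legitimate alternative and is close in spirit to the isogeny arguments the authors themselves use in Section 3. One thing the paper's proof buys that yours does not: it establishes that \eqref{riemme} is an isomorphism at a generic point, a fact reused later (e.g. for \eqref{poldo}).

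Two steps in your write-up need repair. First, the dominance of $p$ and $q$ is asserted rather than proved; the reason it is true is that the decomposition $V_\QQ=V^G\oplus V^-$ is $\om$-orthogonal (by $G$-invariance of $\om$), so every $G$-commuting complex structure splits and the symmetric domain covering $Z$ is literally a product of the Siegel-type domain for the invariant part and the domain defining $\ada$; this should be stated. Second, and more substantively, the sentence ``this image is contained in the closed subvariety $\T_{g'}\subseteq\A_{g'}$'' is not correct as written: $p\circ j([C])$ is the class of $(JC)^{G,0}=\im(f^*)$ with its induced non-principal polarization, which is only \emph{isogenous} to $JC'$, so the image lies in a Hecke translate of the Torelli locus rather than in $\T_{g'}$, and the locus of abelian varieties isogenous to Jacobians is not Zariski closed. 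The conclusion survives by a slightly different mechanism: within the family the kernel of $f^*$ is topologically constant, so $p\circ j\circ\nu$ factors through $[C\ra C']\mapsto[C']\in\M_{g'}$ and its image has dimension at most $\dim\M_{g'}$; density of the image in a $g'(g'+1)/2$-dimensional Shimura variety then forces $g'(g'+1)/2\leq\dim\M_{g'}$, hence $g'\leq 3$. This is more than the ``bookkeeping'' you anticipate, because it changes which closed set the density argument is run against.
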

\begin{proof}
  Both $\tmd$ and $\ada$ are complex orbifolds and $\prym$ is an
  orbifold map.  We wish to show that $\prym: \tmd \ra \ada$ is
  generically submersive.

  Fix a point $x=[C]\in \tmd$, denote by $f : C \ra C':=C/G$ the
  covering and set $(A,\Theta)=P(f)$, so that
  $ \prym(x) = [A,\Theta]$.  The orbifold tangent space of $\tmd$ at
  $x$ is $H^1(C,T_C)^G$, while the orbifold tangent space of $\ada$ at
  $\prym(x) $ is $(S^2H^0(A,\Omega^1_A)^*)^G$.  Observe that
  \begin{equation}
    \label{h0}
    H^0(K_C)= H^0(K_C)^G \oplus H^0(K_C)^-
  \end{equation}
  where $H^0(K_C)^G \cong H^0(C',K_{C'})$ and
  $ H^0(K_C)^- \cong H^{0}(A,\Omega_A^1)$ denotes the sum of the
  non-trivial isotypic components as a representation of $G$.
  Moreover $T_x^* \tmd \cong H^0(C,2K_C)^G$ and
  $T_{\prym(x)}^* \ada \cong \left ( S^2H^0(C,K_C)^- \right)^G $. So
  the codifferential (i.e. the dual of the differential) of $\prym$ at
  $x$ is a map
  \begin{gather*}
    d\prym_{x}^*: (S^2H^{0}(K_C)^-)^G \ra H^0(C, 2K_C)^G.
  \end{gather*}
  This map is just the restriction of the multiplication map
  \begin{gather}
    \label{mmult}
    m : S^2H^{0}(K_C) \ra H^0(C, 2K_C).
  \end{gather}
  Denote by $\HE_g \subset \M_g$ the hyperelliptic locus.  Assume
  first that $\M_\Datum$ is not contained in $\HE_g$ and that
  $x \not \in \HE_g$. Then the multiplication map \eqref{mmult} is
  surjective by Noether theorem. By Schur lemma
  $m ( (S^2H^{0}(K_C))^G ) = H^0(C, 2K_C)^G$.  Hence condition
  \eqref{bona} implies that
  \begin{gather}
    \label{riemme}
    m\restr{(S^2H^{0}(K_C))^G} : (S^2H^{0}(K_C))^G\lra H^0(C, 2K_C)^G
  \end{gather}
  is in fact an isomorphism.  But
  $(S^2H^{0}(K_C)^-)^G \subset (S^2H^{0}(K_C))^G$, so we conclude that
  $d\prym_{x}^*$ is injective. Hence $d\prym_{x}$ is surjective. This
  shows that $\prym$ is submersive at $x$.

  Assume now that $ \M_\Datum \subset \HE_g$ and denote by
  $\sigma: C \ra C$ the hyperelliptic involution.  Then $m$ maps
  $S^2H^{0}(K_C)$ onto $ H^0(C, 2K_C)^{\langle \sigma \rangle}$.  If
  $\sigma \in G$, then
  $H^0(C,2K_C)^G \subset H^0(C, 2K_C)^{\langle \sigma \rangle}$.  Just
  as before Schur lemma shows that \eqref{riemme} is onto
  and \eqref{bona} yields that \eqref{riemme} is an isomorphism. It
  follows that $\prym$ is submersive at $x$.  If instead
  $\sigma \not \in G$, denote by $\tilde{G}$ the subgroup of $\Aut(C)$
  generated by $G$ and $\sigma$.  Arguing as above we conclude that
  the multiplication map
  $(S^2H^{0}(K_C))^{\tilde{G}} \ra H^0(C, 2K_C)^{\tilde{G}} $ is
  surjective. Since $ \M_\Datum \subset HE_g$, we have
  $H^0(2K_C)^G = H^0(2K_C)^{\tilde{G}}$ and since $\sigma$ acts by
  multiplication by $-1$ on $H^0(K_C)$, it acts trivially on
  $S^2H^0(K_C)$, therefore
  $(S^2H^{0}(K_C))^G = (S^2H^{0}(K_C))^{\tilde{G}}$. So also in this
  case the multiplication map \eqref{riemme} is surjective, by
  \eqref{bona} it is an isomorphism and $d\prym_{x}$ is surjective.

  We have proved that in case $ \M_\Datum \subset \HE_g$, $\prym$ is
  submersive on $\tmd $ in the orbifold sense, while in case
  $ \M_\Datum$ is not contained in $\HE_g$, $\prym$ is submersive on
  $\nu\meno(\M_\Datum - \HE_g)$.  At any case $\prym$ is
  generically submersive (in the orbifold sense) hence it is dominant.

  From \eqref{h0} we get
  \begin{equation}
    \label{W}
    \begin{gathered}
      (S^2H^0(K_C))^G
      \cong S^2H^0(K_{C'}) \oplus (S^2H^{0}(K_C)^-)^G .
    \end{gathered}
  \end{equation}
  Since the multiplication map \eqref{riemme} at a generic point is an
  isomorphism, its restriction to $S^2H^0(K_{C'}) $ is
  injective. Moreover it maps $S^2H^0(K_{C'}) $ to
  $H^0(2K_{C'}) \subset H^0(2K_C)^G$. Hence
  $\dim(S^2H^0(K_{C'}) ) \leq \dim H^0(2K_{C'})$, which yields
  $g' \leq 3$.
\end{proof}

\begin{lemma}
  \label{etale}
  There do not exist positive dimensional families of \'etale
  coverings $f:C \ra C'=C/G$ with $g'= g(C') \geq 2$ satisfying
  condition $(*)$.
\end{lemma}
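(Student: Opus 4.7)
The plan is to show that condition \eqref{bona} forces $(S^2 H^0(K_C)^-)^G = 0$, then contradict this using the explicit $G$-module structure of $H^0(K_C)^-$ for étale covers.

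First I would carry out the reduction. Since $f$ is étale, $r = 0$ and $K_C = f^* K_{C'}$, so $H^0(2K_C)^G = H^0(2K_{C'})$ has dimension $3g' - 3$. The decomposition $H^0(K_C) = H^0(K_{C'}) \oplus H^0(K_C)^-$ yields
\begin{equation*}
  (S^2 H^0(K_C))^G = S^2 H^0(K_{C'}) \oplus (S^2 H^0(K_C)^-)^G,
\end{equation*}
so condition \eqref{bona} reads $\binom{g'+1}{2} + \dim (S^2 H^0(K_C)^-)^G = 3g'-3$. This forces $g' \in \{2, 3\}$ (also a consequence of Theorem \ref{bau1}) and $(S^2 H^0(K_C)^-)^G = 0$ in both cases.

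The computational heart is to pin down $H^0(K_C)^-$ as a $G$-module. I would use the decomposition $f_* \OO_C = \bigoplus_\rho V_\rho \otimes \mathcal{F}_\rho$, where each $\mathcal{F}_\rho$ is a flat vector bundle on $C'$ of rank $\dim V_\rho$ and degree $0$. Riemann-Roch then gives $\chi(K_{C'} \otimes \mathcal{F}_\rho) = (\dim V_\rho)(g'-1)$, and for $\rho \neq 1$ the group $H^1(K_{C'} \otimes \mathcal{F}_\rho) \cong H^0(\mathcal{F}_\rho^*)^*$ vanishes since $\mathcal{F}_\rho^*$ has non-trivial irreducible monodromy and hence no invariant global sections. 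This is the Chevalley-Weil formula for étale covers, and it yields $H^0(K_C)^- \cong (g'-1)\,R$, where $R = \bigoplus_{\rho \neq 1}(\dim V_\rho)\,V_\rho$ is the reduced regular representation of $G$.

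The final step is to observe that $(S^2 R)^G \neq 0$. The group ring $\CC[G]$ carries the canonical $G$-invariant non-degenerate symmetric bilinear form $\langle g, h \rangle = \delta_{g, h}$, and the decomposition $\CC[G] = \mathbf{1} \oplus R$ is orthogonal since the two summands share no irreducible constituent. The restriction of this form to $R$ is therefore non-degenerate and gives a non-zero element of $(S^2 R)^G$. Since $g'-1 \geq 1$, $R$ is a subrepresentation of $H^0(K_C)^- \cong (g'-1)\,R$, so $(S^2 H^0(K_C)^-)^G \neq 0$, contradicting the vanishing obtained above. The main obstacle is the Chevalley-Weil computation of $H^0(K_C)^-$; the remaining representation theory is elementary.
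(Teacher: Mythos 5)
Your proof is correct, and it coincides with the paper's up to the final step: both reduce, via the splitting $(S^2H^0(K_C))^G \cong S^2H^0(K_{C'}) \oplus (S^2(V^-))^G$ and the coincidence $g'(g'+1)/2 = 3g'-3$ for $g'=2,3$, to the vanishing $(S^2(V^-))^G=0$, and both use Chevalley--Weil to see that $V^-$ contains every non-trivial irreducible $V_\chi$ with multiplicity $(\dim V_\chi)(g'-1)>0$. You diverge in how you contradict that vanishing. The paper argues that $G$ must have a non-trivial one-dimensional character $\chi$, for which either $(S^2V_\chi)^G\neq 0$ or $(V_\chi\otimes V_{\chi'})^G\neq 0$ with $\chi'=\bar\chi$; to guarantee such a character exists it must bound $|G|\le 12$, which it extracts from Theorem \ref{bau1}, the inequality $g\le 6g'+1$ of \cite{fpp}, and Riemann--Hurwitz. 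You instead note that the reduced regular representation $R$ of any non-trivial finite group satisfies $(S^2R)^G\neq 0$, via the standard $G$-invariant symmetric form $\langle g,h\rangle=\delta_{g,h}$ on $\C[G]$, whose restriction to $R$ is non-degenerate because the trivial summand splits off orthogonally by Schur. One small point to make explicit: that form is a priori a non-zero element of $(S^2R^*)^G$ rather than of $(S^2R)^G$, but $R$ is self-dual (its character is real-valued), so the two invariant spaces are isomorphic and the conclusion stands. Your route is more uniform: it needs no bound on $|G|$ and no inspection of small groups, only that $G$ is non-trivial (which both arguments tacitly assume). In exchange, the paper's version stays entirely at the level of characters and avoids invoking the group algebra; mathematically your argument is, if anything, the more economical and more general one.
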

\proof Assume that a family of \'etale coverings is given satisfying
\eqref{star} and $g' \geq 2$.  Then
$\dim (S^2 H^0(K_C))^G = \dim H^0(2K_C)^G = 3g'-3$ since $r=0$. We
have $H^0(K_C) \cong H^0(K_{C'}) \oplus V^-$, where
$V^- = \oplus_{\chi \in I} \nu_{\chi} V_{\chi}$, where $I$ is the set
of non-trivial irreducible characters of $G$.  So
$(S^2H^0(K_C))^G \cong S^2H^0(K_{C'}) \oplus (S^2(V^-))^G$ and
\begin{equation*}
  \begin{gathered}
    \dim (S^2 H^0(K_C))^G = 3g'-3 = \\
    \dim S^2H^0(K_{C'}) + \dim
    (S^2(V^-))^G \geq \\
    \dim S^2H^0(K_{C'}) = \frac{g'(g'+1)}{2} = 3g'-3,
  \end{gathered}
\end{equation*}
since $g'=2,3$. This implies $(S^2(V^-))^G =0$. By Chevalley-Weil
formula \cite{chewe} (see also \cite{naeff} or
\cite[Thm. 2.8]{fragle}) we have
$\nu_{\chi} = (\dim V_{\chi})(g'-1) >0$, for all non-trivial
irreducible character ${\chi}$. So for any $\chi \in I$ we have
$(S^2V_{\chi})^G =0$ and for any $\chi, \chi' \in I$, we have
$(V_{\chi} \otimes V_{\chi'} )^G=0$ if ${\chi} \neq {\chi'}$.  If
there is a non-trivial 1-dimensional representation $V_\chi$, this is
impossible. In fact let $\chi'$ be the character of $V_\chi^*$.  If
$\chi \neq \chi'$, then $(V_{\chi} \otimes V_{\chi'})^G \neq 0$. If
${\chi} = {\chi'}$, then
$0 \neq (V_{\chi} \otimes V_{\chi})^G \cong (S^2 (V_{\chi}))^G$, since
$\Lambda^2V_{\chi} =0$ because $\dim V_{\chi}=1$.

By Theorem \eqref{bau1} we know that $g'\leq 3$ and from of
\cite[Thm. 1.2]{fpp} that $g \leq 6g'+1$.  Denote by $d:= |G|$. If
$g'=2$, we have $d = g-1 \leq 6 g' = 12$ by Riemann-Hurwitz, while in
case $g'=3$, we have $g-1 = 2d$, hence
$d = \frac{g-1}{2} \leq 3g' =9$.  So at any case $d \leq 12$ and all
groups with $|G| \leq 12$ admit non-trivial 1-dimensional irreducible
representations.  This concludes the proof.  \qed

\begin{teo}
  \label{bau3} The only positive dimensional families of Galois
  coverings $f:C \ra C'=C/G$ with $g'= g(C') \geq 1$ and $g = g(C)$
  which satisfy condition $(*)$ have $g'=1$ and are the 6 families
  found in \cite{fpp}.
\end{teo}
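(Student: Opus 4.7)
The plan is to combine the reductions already established in the paper with an exhaustive finite check, mirroring the strategy used in \cite{fpp} for $g'=1$. By Theorem~\ref{bau1} we already know $g'\leq 3$, and by \cite[Thm.~1.2]{fpp} we have $g\leq 6g'+1$. Since the case $g'=1$ is exactly what is treated in \cite{fpp} and produces the 6 families in the statement, it remains to exclude positive-dimensional families with $g'\in\{2,3\}$ satisfying \eqref{bona}.

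First, I would dispose of the \'etale case: for $g'\in\{2,3\}$ and $r=0$, Lemma~\ref{etale} already shows that no positive-dimensional family satisfies $(*)$. Hence we may assume $r\geq 1$. The next step is to bound $r$ and $|G|$. From Riemann--Hurwitz
\begin{equation*}
2g-2 = |G|\left(2g'-2 + \sum_{j=1}^r \left(1-\frac{1}{m_j}\right)\right),
\end{equation*}
together with $g\leq 6g'+1$ and $m_j\geq 2$, one obtains an explicit upper bound on $|G|$ and on $r$ (recalling also that $\dim\md = 3g'-3+r>0$ forces $r\geq 1$ when $g'=2$ and allows $r\geq 0$ when $g'=3$, but the latter is already excluded). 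The integers $m_j$ are likewise constrained since each must divide the exponent of some element of a group of bounded order.

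Once the list of admissible numerical types $(g',r,m_1,\dots,m_r)$ is finite, and $|G|$ is bounded, one enumerates all finite groups $G$ of the allowed orders together with all surjective homomorphisms $\theta:\Ga_{g',r}\to G$ sending the $\ga_j$ to elements of the prescribed orders $m_j$. For each such datum $\Datum=(G,\theta)$ one computes the isotypic decomposition of $H^0(K_C)$ via the Chevalley--Weil formula \cite{chewe}, which determines the multiplicities $\nu_\chi$ of the irreducible $G$-representations; from this both sides of \eqref{bona} can be computed as
\begin{equation*}
N(\Datum) = \dim\bigl(S^2 H^0(K_C)\bigr)^G, \qquad \dim H^0(2K_C)^G = 3g'-3+r,
\end{equation*}
the first being a sum of dimensions $(S^2V_\chi)^G$ and $(V_\chi\otimes V_{\chi'})^G$ weighted by the $\nu_\chi$. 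Running the same computer program as in \cite{fpp} on this finite list one verifies that \eqref{bona} fails in every case with $g'\in\{2,3\}$.

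The main obstacle is therefore organisational rather than conceptual: producing and systematically searching the (finite but not tiny) list of admissible data for $g'=2,3$ and ensuring no numerical type is missed. The representation-theoretic reasoning used in Lemma~\ref{etale}, in particular the observation that any non-trivial $1$-dimensional character of $G$ forces a nonzero contribution to $(S^2H^0(K_C))^G$, gives a useful shortcut that eliminates most groups appearing on the list at once, leaving only a small residue of ramified cases to be settled by direct computation.
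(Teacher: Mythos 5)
Your proposal is correct and follows essentially the same route as the paper: the proof of Theorem \ref{bau3} consists precisely of invoking Theorem \ref{bau1} for $g'\leq 3$, the bound $g\leq 6g'+1$ from \cite{fpp}, Lemma \ref{etale} to exclude the \'etale case, and then a finite computer search over the remaining ramified data as in \cite{fpp}. Your additional remarks on bounding $|G|$ and $r$ via Riemann--Hurwitz and computing $N(\Datum)$ via Chevalley--Weil simply make explicit the finiteness of that search.
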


\begin{proof}
  From Theorem \eqref{bau1} we know that $g'\leq 3$. From Theorem 1.2
  of \cite{fpp} we know that $g \leq 6g'+1$. From Lemma \ref{etale} we
  know that the covering has to be ramified and by computer
  calculations as in \cite{fpp} we find exactly the 6 families of
  \cite{fpp} with $g'=1$.
\end{proof}

\section{Families of isogenous Abelian varieties}

\emph{Riemannian} symmetric spaces appear in Hodge theory as parameter
spaces of Abelian varieties. In the sequel we will compare totally
geodesic submanifolds of two Siegel spaces with respect to different
polarizations. To do that we will embed both Siegel spaces in a larger
parameter space for complex tori that is a \emph{non-Riemannian
  symmetric space}.  Therefore we start by recalling the definition
and the main facts regarding this more general class of symmetric
spaces, that are less known than the Riemannian ones. We will follow
\cite[vol. II, Ch. XI]{kn}.

The definition of a general (i.e. not necessarily Riemannian)
symmetric space is obtained from definition of Riemann symmetric space
by dropping the request that the connection is compatible with a
Riemannian metric. More precisely, consider a differentiable manifold
$M$ with an affine connection $\nabla$, i.e. a linear connection on
$TM$. With the aid of $\nabla$ one can define geodesics, the
exponential map, completeness and curvature. A diffeomorphism
$f: M\ra M$ is an \emph{affine transformation} if it preserves the
connection \cite[vol. I, p. 225]{kn}.  The group $\Aff(M)$ of all
affine transformations is a Lie group acting smoothly on $M$
\cite[vol. I, p. 229]{kn}.  We say that $(M, \nabla)$ is a
\emph{symmetric space} if (1) $\nabla$ is symmetric
i.e. $T(\nabla)=0$, (2) $M$ is connected, (3) $M$ is complete with
respect to $\nabla$ and (4) for each point $x \in M$ there is a
\emph{symmetry} at $x$, i.e. an affine transformation $s_x : M\ra M$
such that $s_x(x) = x$ and $(ds_x) _x = -\id_{T_x M}$.  As in the
Riemannian case there is a local version of condition (4) which is
equivalent to the fact that $\nabla R = 0$.

Assume that $(M,\nabla)$ is a symmetric space and let $G $ denote the
connected component of the identity in $ \Aff (M)$. Then $G$ acts
transitively on $M$.  Fix a point $o\in M$.  If $g\in G$, then
$\sigma (g) : = s_o \circ g \circ s_o \in G$ and $\sigma : G \ra G$ is
an involutive automorphism of $G$. If we set $H:= G_o$, then $M=G/H$
and $H$ lies between $G^\sigma = \{ g \in G: \sigma(g) = g\}$ and the
identity component of $G^\sigma$.

\label{KleinSS}
Conversely assume that $(G,H,\sigma )$ is a \emph{symmetric triple},
i.e.  $G$ is a connected Lie group, $\sigma $ is an involutive
automorphism of $G$ and $H$ is a closed subgroup lying between
$G^\sigma$ and its component of the identity.  Then $M:=G/H$ is a
\emph{reductive homogeneous space} (see \cite[vol. II, ch. X]{kn}.  As
such it admits the so-called \emph{canonical connection}
$\nabla$. With this connection $M$ is a symmetric space,
\cite[vol. II, pp. 230-231]{kn}.  If
$\liem : = \{X\in \lieg: d\sigma (X) = -X\}$ then
$\lieg = \lieh \oplus \liem$ and there is an isomorphism
\begin{gather}
  \label{isoliem}
  \liem \cong T_oM, \quad X \mapsto \desudtzero \exp(tX)\cd o.
\end{gather}

\label{tot}
If $M$ is a manifold with an affine connection $\nabla$ and
$M' \subset M$ is a submanifold, denote by $p$ the canonical
projection $TM\restr{M'} \ra N:= TM\restr{M'} /TM'$. The second
fundamental form is defined as $\seff(X,Y):= p (\nabla_XY)$.  If
$\nabla$ is symmetric, then $\seff \in \Gamma (S^2T^*M' \otimes N)$.
A \emph{totally geodesic submanifold} of $M$ is a submanifold $M'$
such that any geodesic $\ga: \R \ra M$ passing through $x\in M'$ at
time $t=0$ with $\dot{\ga}(0) \in T_x M'$ remains in $M'$ for $|t|$
small enough.  It can be proved in the usual way that $M'$ is totally
geodesic iff its second fundamental form vanishes identically.

\label{subtriple}
Let $(G,H,\sigma)$ be a symmetric triple. A \emph{subtriple} is a
triple $(G',H',\sigma')$ with $G'$ a connected Lie subgroup of $G$
invariant by $\sigma$, $H'=G'\cap H$ and
$\sigma':= \sigma \restr{G'}$.  A subtriple is automatically a
symmetric triple.

\begin{teo}
  \label{totSS} (1) Let $(G,H,\sigma)$ be a symmetric triple and let
  $(G',H',\sigma')$ be a subtriple. Then the inclusion $G'\subset G$
  induces an embedding $G'/H' \cong M':= G'\cd o \subset M=G/H$ and
  $M'$ is a totally geodesic submanifold of $M$. The canonical
  connection of $M$
  restricts to the canonical connection of $M'$.\\
  (2) Conversely set $o:= H \in M= G/H$ and let $M' \subset M$ be a
  complete and connected totally geodesic submanifold of $M$.  Set
  $G''=\{ g \in G: g (M')=M'\}$. Then $G''$ is a Lie subgroup of $G$.
  Let $G'$ denote the identity component of $G''$. Then $G'$ is
  invariant by $\sigma$, so
  $(G', H':=H \cap G', \sigma':=\sigma \restr{G'})$ is a subtriple and
  $M'=G'/H'$.\\
  (3) If $M' \subset M$ is a totally geodesic submanifold through $o$,
  then via \eqref{isoliem} we have $T_oM' \cong \liem'$ for a
  \emph{Lie triple system} $\liem '$, i.e. for a vector subspace
  $\liem '\subset \liem$ satisfying
  $[[\liem', \liem'],\liem'] \subset \liem'$.
\end{teo}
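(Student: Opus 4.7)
My plan is to follow the classical development of symmetric spaces in Kobayashi--Nomizu (vol.~II, Ch.~XI), specialised to the non-Riemannian setting. Throughout I use two facts about the canonical connection on $M = G/H$: the geodesics through $o$ are exactly $t\mapsto \exp(tX)\cd o$ with $X\in\liem$, and the curvature at $o$ satisfies $R_o(X,Y)Z = -[[X,Y],Z]$ for $X,Y,Z \in \liem$.

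For (1), $(G',H',\sigma')$ is a symmetric triple by the paragraph preceding the statement, so the canonical connection and its geodesics on $M' = G'/H'$ are already defined. Writing $\liem' := \liem\cap\lieg'$, the geodesics of $M'$ through $o$ are $t \mapsto \exp(tX)\cd o$ for $X\in\liem'\subset\liem$, which are also geodesics of $M$ tangent to $T_oM'\cong\liem'$. Hence $M'$ is totally geodesic at $o$ and the two canonical connections agree there. Homogeneity under $G'$, whose elements act as affine transformations both on $M$ and on $M'$, propagates both properties to every point of $M'$. The injectivity of $G'/H' \hookrightarrow M$ is immediate from $H' = G'\cap H$.

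For (3), the standard principle that the ambient curvature tensor restricts to the intrinsic one on a totally geodesic submanifold forces $R_o$ to preserve $\liem' = T_oM'$. Combined with $R_o(X,Y)Z = -[[X,Y],Z]$, this gives $[[\liem',\liem'],\liem'] \subset \liem'$. Part (2) then reverses (1): use (3) to see that $\liem' := T_oM'$ is a Lie triple system, set $\lieh' := [\liem',\liem'] \subset \lieh$ (which uses $[\liem,\liem]\subset\lieh$) and $\lieg' := \lieh'\oplus\liem'$; the triple axiom makes $\lieg'$ a $\sigma$-invariant Lie subalgebra of $\lieg$. Let $\tilde G \subset G$ be the corresponding connected Lie subgroup; applying (1) to the subtriple $(\tilde G,\tilde G\cap H,\sigma\restr{\tilde G})$ produces a complete connected totally geodesic submanifold $\tilde G\cd o \subset M$ with tangent space $\liem'$ at $o$. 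A uniqueness argument, following the exponential map in both $M'$ and $\tilde G\cd o$ and invoking completeness on each side, gives $\tilde G\cd o = M'$; in particular $\tilde G\subset G''$.

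The main obstacle is the final step of (2): showing that the identity component $G'$ of $G''$ coincides with the algebraically constructed $\tilde G$, rather than being a priori larger. One must check that any element of $\lieg$ whose adjoint action on $\liem$ preserves $\liem'$ already lies in $[\liem',\liem']\oplus\liem' = \lieg'$. This reduces to the uniqueness of a complete connected totally geodesic submanifold with prescribed tangent space at one point, and is the technical core of the non-Riemannian analogue of Cartan's theorem; for it I would appeal to Kobayashi--Nomizu's structure results on affine transformations and their infinitesimal generators.
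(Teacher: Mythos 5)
The paper offers no proof of this theorem; it simply cites Kobayashi--Nomizu, vol.~II, pp.~234--237, and your reconstruction follows exactly that classical route (geodesics $t\mapsto\exp(tX)\cd o$, the curvature formula $R_o(X,Y)Z=-[[X,Y],Z]$, Lie triple systems). Your parts (1) and (3) are correct as sketched, and so is the first half of (2): the construction of $\lieg'=[\liem',\liem']\oplus\liem'$, the subtriple generated by it, and the identification $\tilde G\cd o=M'$ via local uniqueness of complete connected totally geodesic submanifolds with prescribed tangent space at $o$.

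The genuine problem is where you locate the ``technical core'' of (2). The theorem does not assert $G'=\tilde G$; it only asserts that $G'$ is $\sigma$-invariant and that $M'=G'/H'$. Worse, the statement you say ``one must check'' --- that every element of $\lieg$ whose adjoint action preserves $\liem'$ already lies in $[\liem',\liem']\oplus\liem'$ --- is false in general: take $M'=M=\R^n$ presented as $(\R^n\rtimes\GL^+(n,\R))/\GL^+(n,\R)$ with symmetry $-\id$ at the origin; then $\liem'=\liem=\R^n$, $[\liem',\liem']\oplus\liem'=\R^n$ is a proper subalgebra, yet $G'=G$. So the step you defer to Kobayashi--Nomizu cannot be carried out, and as structured your proof of (2) stalls there. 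Fortunately it is not needed: since $o\in M'$ and every element of $G'$ preserves $M'$, one has $M'=\tilde G\cd o\subset G'\cd o\subset M'$, hence $M'=G'\cd o\cong G'/H'$ with $H'=G'\cap H$ the stabilizer of $o$; and $\sigma$-invariance of $G''$ (hence of its identity component $G'$) follows because $s_o(M')$ is again a complete connected totally geodesic submanifold through $o$ with tangent space $(ds_o)_o(T_oM')=T_oM'$, so $s_o(M')=M'$ by the very uniqueness you already invoked, whence $\sigma(g)=s_o\, g\, s_o$ preserves $M'$ whenever $g$ does. The one claim you leave untouched and should address is why $G''$ is a Lie subgroup of $G$ at all (it need not be closed when $M'$ is not closed in $M$); that is the place where an appeal to the structure theory in Kobayashi--Nomizu is genuinely required.
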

See \cite[vol. II, p. 234-237]{kn} for a proof.

Set $V:=\R^{2g}$ and define
\begin{gather}
  \label{eq:1}
  \co (V):= \{J \in \End V: J^2 = -\id_V\}
\end{gather}
This set is invariant by the adjoint action of $\Gl(V)$ on $\End V$.
Let $V_J$ denote the complex vector space with underlying real space
$V$ and complex multiplication defined by $i\cd v := Jv$.  Fix
$J\in \co(V)$, and let $J'$ be another point of $\co(V)$.  Fix bases
$\{e_i\}$ and $\{e'_i\}$ of $V_J$ and $V_{J'}$ respectively.  Thus
$\{e_1, \lds, e_g, Je_1, \lds, Je_g\}$ is a basis of $V$ and the same
for $\{e'_i, J'e'_i\}$. Hence there is a unique map $a \in \Gl(V)$
such that $a(e_i) = e'_i$ and $a(Je_i) = J'e'_i$. It follows that
$aJ = J'a$, i.e. $\Ad a (J) = J'$. This shows that
$\co(V) \cong \GL(V) /\GL(V_J)$.  Thus $\co(V)$ is a manifold with two
connected components. The connected component containing $J$ is the
orbit $\Gl^+ (V) \cd J \cong \Gl^+(V) / \Gl(V_J)$.

Fix $J \in \co(V)$ and consider the automorphism
\begin{gather*}
  \sigma_J : \GL(V) \ra \GL(V), \quad \sigma_J (a) : = \Ad J (a) = - J
  aJ.
\end{gather*}
Since $J^2 = -\id_V$, $\sigma_J$ is involutive and $\GL(V_J)$ is its
fixed point set.  Thus we have proved that the connected component of
$\co(V)$ containing $J$ is the symmetric space associated with the
symmetric triple $(\Gl^+(V), \Gl(V_J), \sigma _J)$.  We can replace
$\Gl^+(V)$ with $\Sl(V)$ and $\Gl(V_J)$ with $ \SL(V) \cap
\GL(V_J)$. Since $\SL(V)$ is simple, Theorem 3.4 of \cite[vol. II,
p. 232]{kn} implies that this space admits a symmetric
pseudo-Riemannian metric.  On the other hand, since $\Sl(V)$ is simple
and the stabilizer of $J$ in $\Sl(V)$, that is $\Sl(V)\cap \Gl(V_J)$,
is non-compact, this symmetric space is not Riemannian.  In the
following we will often refer to $\co(V)$ as a symmetric space, even
tough this is true only of its connected components.

Let $\om$ be a symplectic form on $V$.  If $J \in \co(V)$, we have
$J^*\om = \om$ if and only if the bilinear form
$g_J:= \om (\cd, J\cd )$ is symmetric.  Set
\begin{gather*}
  \sieg(V,\om) := \{ J\in \co( V): J^*\om=\om, g_J \text{ is positive
    definite}\}.
\end{gather*}
This is the \emph{Siegel space} of $(V,\om)$.
\begin{prop}
  \label{siegtot}
  The Siegel space $\siegv$ is a totally geodesic submanifold of
  $\co(V)$.  It is itself a symmetric space and in fact a Riemannian
  one.
\end{prop}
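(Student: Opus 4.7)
The plan is to obtain $\siegv$ as the totally geodesic orbit of a carefully chosen subtriple of the symmetric triple for $\co(V)$, so that Theorem \ref{totSS}(1) applies directly. Fix a base point $J \in \siegv$. Since $J^*\om = \om$, we have $J \in \Spv$, and $J^2 = -\id_V$ means conjugation by $J$ preserves $\Spv$; thus $\sigma_J = \Ad J$ restricts to an involutive automorphism of $\Spv$. Taking $G' := \Spv$ (which is a connected closed Lie subgroup of $\Sl(V)$), $H' := \Spv \cap \Gl(V_J)$, and $\sigma' := \sigma_J\restr{G'}$ thus gives a subtriple of $(\Sl(V), \Sl(V)\cap\Gl(V_J), \sigma_J)$. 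By Theorem \ref{totSS}(1) the orbit $\Spv \cdot J \subset \co(V)$ is a totally geodesic submanifold isomorphic to $\Spv/H'$.

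The next step is to identify this orbit with $\siegv$. One inclusion follows from a direct computation: if $a \in \Spv$ and $J' = aJa\meno$, then $(J')^*\om = \om$ because $a$ and $J$ both preserve $\om$, and $g_{J'}(v,w) = \om(v, aJa\meno w) = g_J(a\meno v, a\meno w)$ is positive definite. For the reverse inclusion I would prove transitivity of the $\Spv$-action on $\siegv$: given $J' \in \siegv$, pick $g_J$-orthonormal bases $\{e_1,\lds,e_g\}$ of $V_J$ and $g_{J'}$-orthonormal $\{e'_1,\lds,e'_g\}$ of $V_{J'}$; the real frames $\{e_i, Je_i\}$ and $\{e'_i, J'e'_i\}$ are then both symplectic bases of $(V,\om)$, and the linear map sending one to the other lies in $\Spv$ and carries $J$ to $J'$. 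This is the point where the positive-definiteness of $g_J, g_{J'}$ is crucial; it is the standard argument underlying the homogeneity of the Siegel upper half space.

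Finally I would check that the resulting symmetric space is Riemannian. The stabilizer $H' = \Spv \cap \Gl(V_J)$ consists exactly of the elements that preserve $\om$ and commute with $J$, hence also preserve $g_J = \om(\cd,J\cd)$; these are the unitary automorphisms $\U(V_J, g_J)$ of the positive definite Hermitian form on $V_J$, and therefore form a compact subgroup. Because $\Spv$ is semisimple and $H'$ is a maximal compact subgroup, the invariant pseudo-Riemannian metric on $\Spv/H'$ coming from the Killing form (as in the general construction of \cite[vol. II, Ch. XI]{kn}) is automatically definite on the complement $\liem' \subset \sp(V,\om)$ of $\lieh'$, yielding a Riemannian symmetric structure compatible with the canonical connection inherited from $\co(V)$.

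The main obstacle is the transitivity step: one has to verify that any two points of $\siegv$ are conjugate by an element of $\Spv$ (not just of $\Gl^+(V)$), which forces the basis change to be simultaneously complex-linear on both sides and symplectic; the positivity of $g_J$ is exactly what makes the adapted symplectic bases above available, and once this is in place the identification with a subtriple, and consequently the totally geodesic and Riemannian conclusions, follow from Theorem \ref{totSS}.
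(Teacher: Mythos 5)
Your proof is correct and follows essentially the same route as the paper: realize $\siegv$ as the orbit of the subtriple $(\Spv,\ \Spv\cap\Gl(V_J),\ \sigma_J\restr{\Spv})$ and invoke Theorem \ref{totSS}, then deduce the Riemannian property from the compactness of the isotropy group, which is the unitary group of the positive definite Hermitian form associated to $g_J$ and $\om$. The only difference is that you spell out the transitivity of the $\Spv$-action on $\siegv$ via adapted symplectic bases, a point the paper leaves implicit as well known.
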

\begin{proof}
  It is well-known that $\siegv \neq \vacuo$ and that it is
  connected. Fix $J\in \siegv$. Then $\Sp(V,\om)$ is invariant by
  $\sigma_J$, since $\sigma_J = \Ad J$ and $J\in \Sp(V,\om)$.  Set
  $G':=\Sp(V,\om) $, $H':= G'\cap \Gl(V_J) $ and
  $\sigma_J' := \sigma_J \restr{G'}$. Then $(G', H', \sigma_J')$ is a
  subtriple of $(\Gl^+(V), \GL(V_J)\cap \GL^+(V), \sigma_J)$, so by
  Theorem \ref{totSS} $\siegv = G'/H'$ is totally geodesic submanifold
  and also a symmetric space itself.
  On $V_J$ consider the Hermitian product
  $H_J (x,y):= g_J(x,y) - i \om (x,y)$. Then $ H'$ is the unitary
  group $ \operatorname{U}(V_J,H_J)$.  Since this is a compact group
  the symmetric space $\siegv$ is Riemannian, see
  \cite[p. 209]{helgason}.
\end{proof}

Set $\Lambda:=\Zeta^{2g}$.  As usual if $F \subset \C$ is a field we
set $\Lambda_F:= \La \otimes _\Zeta F$.  Then $V= \Lambda_ \R$ and
$T:=V/\Lambda$ is a real torus of dimension $2g$.  Since the tangent
bundle to $T$ is trivial, any $J\in \co(V)$ yields a complex structure
on $T$. We denote $T_J$ the complex torus obtained in this way. Any
complex torus of dimension $g$ is isomorphic to $T_J$ for some $J$. In
this sense $\co(V)$ is a parameter space for $g$-dimensional complex
tori. If $f: T_{J} \ra T_{J'}$ is an isomorphism, then $f$ lifts to an
isomorphism $a : V \ra V$ such that $a (\La) = \La$ and
$\Ad a( J) = J'$.  Thus $T_J$ and $T_{J'}$ are isomorphic iff there is
$a\in \Gl(\Lambda)$ such that $\Ad a (J) = J'$.

More generally an isogeny of $T_J$ onto $T_{J'}$ is a surjective
morphism $f: T_J \ra T_{J'}$ with finite kernel. This lifts to a
linear map $a : V \ra V$ of maximal rank, hence invertible, such that
$J'a = aJ $ (i.e. $f$ is holomorphic) and $a(\La) \subset \La$. It
follows that $a \in \Gl(\Lambda _\QQ)$.  Conversely, given
$a\in \Gl(\La_\QQ)$ such that $J'a = aJ $, multiplying $a$ by an
appropriate positive integer $m$, we get a linear map
$m\cd a : V\ra V$ such that $m \cd a(\La) \subset \La$. This induces a
surjective holomorphic morphism $f : T_J \ra T_{J'}$, which is an
isogeny. Thus $T_J$ is isogenous to $T_{J'}$ iff there is
$a\in \Gl(\Lambda_{\QQ})$ such that $\Ad a (J) = J'$.


The map $J \mapsto V_J^{0,1}$ is a diffeomorphism of $\co(V)$ onto the
set $\Omega:=\{W\in \grass(g, V_\C): W \cap \bar{W} = \{0\} \}$, which
is an open subset of the Grassmannian in the analytic
topology. \mihi{It is not Zariski open (already in case $g=1$), since
  its complement is not analytic.} So $\co(V)$ is a complex
manifold\mihi{, but it is not quasi-projective}.


\begin{lemma}
  \label{lemmetto} Let $Z_1, Z_2 \subset \co(V)$ be irreducible
  analytic subsets.  Let $\Omega$ be a non-empty open subset of $Z_1$.
  Assume that for any $J_1 \in \Omega$ there is some $J_2 \in Z_2$
  such that $T_{J_1}$ is isogenous to $T_{J_2}$. Then there is
  $a\in \Gl(\La_\QQ)$ such that $\Ad a (Z_1 ) \subset Z_2$.
\end{lemma}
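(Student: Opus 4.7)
The plan is to exploit the countability of $\Gl(\La_\QQ)$. By the discussion preceding the lemma, two tori $T_{J_1}$ and $T_{J_2}$ are isogenous if and only if there exists $a\in\Gl(\La_\QQ)$ with $\Ad a(J_1)=J_2$. For every such $a$ the map $\Ad a$ is a linear, hence holomorphic, automorphism of $\End V$ preserving $\co(V)$, so
\[
S_a := Z_1 \cap (\Ad a)\meno(Z_2)
\]
is an analytic subset of $Z_1$. The hypothesis translates to
\[
\Omega \subset \bigcup_{a\in\Gl(\La_\QQ)} S_a,
\]
and this is a countable union.

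The key step is a Baire category argument. Since $Z_1$ is an analytic subset of a complex manifold, it is locally compact Hausdorff, so the open subset $\Omega\subset Z_1$ is a Baire space. A Baire space cannot be written as a countable union of closed sets with empty interior, so there exists $a_0\in\Gl(\La_\QQ)$ such that $\Omega\cap S_{a_0}$ contains a non-empty open subset of $Z_1$. In particular $S_{a_0}$ has non-empty interior in $Z_1$.

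Finally I invoke the irreducibility of $Z_1$: any proper analytic subset of an irreducible analytic set has empty interior, so $S_{a_0}$ with non-empty interior must equal all of $Z_1$. This is precisely the desired conclusion $\Ad a_0(Z_1)\subset Z_2$.

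There is essentially no technical obstacle. The heart of the proof is the observation that isogeny classes are $\Gl(\La_\QQ)$-orbits under $\Ad$ and that $\Gl(\La_\QQ)$ is countable; everything else is a combination of Baire category and the fact that a proper analytic subvariety of an irreducible analytic set is nowhere dense. The only points worth double-checking are that $\Ad a$ genuinely preserves $\co(V)$ (immediate, since $\Ad a$ is linear and preserves the defining equation $J^2=-\id_V$) and that open subsets of analytic sets are Baire spaces in the Euclidean topology (standard).
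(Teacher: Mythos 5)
Your proof is correct and follows essentially the same route as the paper's: translate the isogeny hypothesis into a countable cover of $\Omega$ by the closed analytic sets $Z_1\cap(\Ad a)\meno(Z_2)$, apply Baire category to find one with non-empty interior, and conclude by the identity principle for irreducible analytic sets. The only difference is cosmetic --- the paper phrases the covering via graphs $\Ga_a$ and a compact exhaustion before applying Baire, whereas you work directly with the preimages $(\Ad a)\meno(Z_2)$, which is a legitimate streamlining since $\pi_1$ restricted to the graph $\Ga_a$ is a homeomorphism.
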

\begin{proof}
  Given $a\in \Gl(\La_\QQ)$, let $\Ga_a \subset \co(V) \times \co(V)$
  denote the graph of $\Ad a$.
  If $\pi_j : \co(V) \times \co(V) \ra \co(V)$ denotes projection on
  the $j$-th factor, the assumption is equivalent to saying that
  \begin{gather*}
    \Omega \subset \bigcup_{a\in \Gl(\La_\QQ)} \pi_1 ( \Ga_a \cap
    \pi_2\meno (Z_2)).
  \end{gather*}
  Indeed, if $J_1\in \Omega$, there is $J_2 \in Z_2$ such that
  $T_{J_1} $ and $T_{J_2}$ are isogenous, i.e. there is
  $a\in \Gl(\La_\QQ)$ such that $(J_1,J_2) \in \Ga_a$.

  For each $a\in \Gl(\La_\QQ)$ the intersection
  $ \Ga_a \cap \pi_2\meno (Z_2)$ is an analytic subset, so we can find
  a sequence $\{K_{a,i}\}_{i\in \N}$ of compact subsets of
  $\co(V)\times \co(V)$ such that
  \begin{gather*}
    \Ga_a \cap \pi_2\meno (Z_2) = \bigcup_{i=1}^\infty K_{a,i}.
  \end{gather*}
  Then
  \begin{gather*}
    \Omega = \bigcup_{a,i} \Omega \cap \pi_1 (K_{a,i}).
  \end{gather*}
  Since $K_{a,i}$ is compact, the set $\Omega \cap \pi_1 (K_{a,i})$ is
  closed in $\Omega$. As $i$ and $a$ vary in countable sets, Baire
  theorem \cite[p. 57]{bredon} implies that there are $i$ and $a$ such
  that $ \pi_1 (K_{a,i})$ contains an open subset $U$ of $\Omega$. The
  set $U$ is clearly open also in $Z_1$ and satisfies
  $U \subset \pi_1 (\Ga_a\cap \pi_2\meno(Z_2))$.  This means that if
  $J\in U$, there is $J'\in Z_2$ such that $(J,J') \in \Ga_a$. In
  other words $\Ad a (J) \in Z_2$ for any $J\in U$. So, setting
  $f=\Ad a: \co (V) \ra \co(V)$, we have $f(U)\subset Z_2$.  Therefore
  $ f\meno(Z_2)\cap Z_1$ is an analytic subset of $Z_1$, which
  contains the open subset $U \subset Z_1$.  By the Identity Lemma
  \cite[p. 167]{grauert-remmert-cas} this implies that
  $ f\meno(Z_2)\cap Z_1 = Z_1$ i.e.  $f(Z_1) \subset Z_2$.
\end{proof}


\begin{prop}
  \label{puff}
  Let $\om_1, \om_2$ be symplectic forms on $V$.  Assume that $Z_1$ is
  an irreducible analytic subset of $\sieg(V,\om_1)$ and that $Z_2$ is
  a totally geodesic submanifold of $\sieg(V,\om_2)$.  Let $\Omega$ be
  a non-empty open subset of $Z_1$ with the property that for any
  $J_1 \in \Omega$ there is some $J_2 \in Z_2$ such that $T_{J_1}$ is
  isogenous to $T_{J_2}$.  Assume moreover that $\dim Z_1 = \dim
  Z_2$. Then there is $a\in \Gl(\La_\QQ)$ such that
  $\Ad a (Z_1 ) = Z_2$. Moreover $Z_1$ is a totally geodesic
  submanifold of $\sieg(V,\om_1)$.
\end{prop}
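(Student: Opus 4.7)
The plan is to deduce Proposition~\ref{puff} from Lemma~\ref{lemmetto} by a two-step argument: first extract the rational isogeny $a$ with $\Ad a(Z_1)\subset Z_2$, upgrade this inclusion to an equality using the dimension assumption, and then transport the totally geodesic property from $Z_2$ back to $Z_1$ via the affine action of $\Ad a$ on the ambient non-Riemannian symmetric space $\co(V)$.

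First, I would apply Lemma~\ref{lemmetto} to $Z_1$ and $Z_2$, regarded as irreducible analytic subsets of $\co(V)$. This yields $a\in\Gl(\La_\QQ)$ with $\Ad a(Z_1)\subset Z_2$. To upgrade to equality, set $W:=\Ad a(Z_1)$, an irreducible analytic subset of $\co(V)$ with $\dim W=\dim Z_1=\dim Z_2$. At any smooth point $p$ of $W$, the subset $W$ is locally a submanifold of $\co(V)$ of the same dimension as $Z_2$ and sits inside $Z_2$, so by dimension count $W$ is locally an open piece of $Z_2$ near $p$. The singular locus $W^{\text{sing}}$ has real codimension at least two in $Z_2$, hence $Z_2\setminus W^{\text{sing}}$ is connected. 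If some $q_0\in Z_2$ failed to lie in $W$, a continuous path in $Z_2\setminus W^{\text{sing}}$ from a smooth point of $W$ to $q_0$ would exit $W$ at some parameter $t^\ast$; but $\gamma(t^\ast)$ would be a smooth point of $W$, at which $W$ contains a neighborhood in $Z_2$, contradicting its being a boundary point. Hence $\Ad a(Z_1)=Z_2$.

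For the totally geodesic assertion, I would use that by Proposition~\ref{siegtot} each $\sieg(V,\om_j)$ is totally geodesic in $\co(V)$, so its canonical connection is the restriction of the canonical connection of $\co(V)$. By transitivity of total geodesy (restrict the connection twice), $Z_2$ is totally geodesic in $\co(V)$. Because the conjugation action of $\Gl(V)$ on $\co(V)$ is by affine transformations of the symmetric structure, $\Ad a$ sends totally geodesic submanifolds to totally geodesic submanifolds, and so $Z_1=\Ad(a^{-1})(Z_2)$ is totally geodesic in $\co(V)$. Since $Z_1\subset\sieg(V,\om_1)$ and $\sieg(V,\om_1)$ is itself totally geodesic in $\co(V)$, any geodesic of $\co(V)$ tangent to $Z_1$ remains inside $\sieg(V,\om_1)$, and therefore $Z_1$ is totally geodesic in $\sieg(V,\om_1)$.

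The main obstacle I foresee is the upgrade from inclusion to equality, since $W=\Ad a(Z_1)$ is only an analytic subset and may a priori have singularities that prevent the local dimension argument from being global. The crucial input is that these singularities have real codimension at least two in $Z_2$, so they cannot disconnect $Z_2$, and the connected path argument then forces $W$ to exhaust $Z_2$; this is exactly where the hypothesis $\dim Z_1=\dim Z_2$ is essential.
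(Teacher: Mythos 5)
Your proposal is correct and follows essentially the same route as the paper: apply Lemma~\ref{lemmetto} to get $\Ad a(Z_1)\subset Z_2$, upgrade to equality by the dimension hypothesis, and transport total geodesy through the chain $Z_2\subset\sieg(V,\om_2)\subset\co(V)$ and back via the affine transformation $\Ad a$. The only difference is in the equality step, where the paper simply invokes that a proper analytic subset of the irreducible set $Z_2$ is nowhere dense (hence of strictly smaller dimension), while you reprove this by hand via connectedness of the complement of the singular locus; both arguments rest on the same implicit fact that $\Ad a(Z_1)$ is a closed analytic subset of $Z_2$.
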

\begin{proof}
  By Lemma \ref{lemmetto} there is $a\in \GL(\La_\QQ)$ such that
  $\Ad a (Z_1) \subset Z_2$. Since $Z_2$ is irreducible, a proper
  analytic subset of $Z_2$ is nowhere dense in $Z_2$, see
  e.g. \cite[p. 168]{grauert-remmert-cas}.  Since
  $\dim Z_1= \dim Z_2 = n$ we conclude that $\Ad a (Z_1) = Z_2$.  This
  proves the first assertion. By assumption $Z_2$ is totally geodesic
  in $\sieg(V,\om_2)$.  By Proposition \ref{siegtot} $\sieg(V,\om_2)$
  is itself totally geodesic in $\co(V)$. Thus $Z_2$ is totally
  geodesic in $\co(V)$.  The same is true for $Z_1 $ since $\Ad a$ is
  an affine transformation of $\co(V)$. As
  $Z_1 \subset \sieg(V,\om_1)$ by assumption, $Z_1$ is in fact a
  totally geodesic submanifold of $\sieg(V,\om_1)$ as desired.
\end{proof}

The following definition goes back to Moonen
\cite{moonen-linearity-1}.
  
\begin{defin}
  \label{defE}
  Let $\om$ be a polarization of type $D$.  Denote by
  $\pi: \siegv \ra \A_g^D $ the canonical projection.  A \emph{totally
    geodesic subvariety} of $\A_g^D$ is a closed algebraic subvariety
  $W \subset \A_g^D$, such that $W = \pi (Z)$ for some totally geodesic
  submanifold $Z \subset \siegv$.
\end{defin}

We wish to prove an analogue of Proposition \ref{puff} for
subvarieties of $\A_g$ instead of $\sieg_g$.  A difficulty in passing
from $\sieg_g $ to $\A_g$ comes from the fact that the map
$\pi : \sieg_g \ra \A_g^D$ is of infinite degree and ramified.  It helps
to factor $\pi$ as an unramified covering of infinite degree followed
by a finite map.  From this one easily gets Proposition \ref {corD}
below, which is enough to ``descend'' Proposition \ref{puff} to
$\A_g$.

\begin{lemma}
  \label{lemC}
  Let $X, Y, Z$ be reduced complex analytic spaces.  Let $p: X \ra Y$
  be an unramified covering and let $q : Y \ra Z$ be a finite Galois
  covering. If $Z'\subset Z$ is an irreducible analytic subset and
  $X' $ is an irreducible component of $(qp)\meno (Z')$, then
  $ q p (X') = Z'$.
\end{lemma}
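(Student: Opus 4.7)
The plan is to handle the two coverings $p$ and $q$ separately. Decompose $q\meno(Z')=Y_1\cup\cds\cup Y_m$ into its irreducible components, so that $(qp)\meno(Z')=p\meno(Y_1)\cup\cds\cup p\meno(Y_m)$. Since $X'$ is irreducible and contained in this union, it lies inside some $p\meno(Y_i)$, and hence is an irreducible component of that $p\meno(Y_i)$. It therefore suffices to establish two claims: (a) $q(Y_i)=Z'$ for every $i$; and (b) $p(X')=Y_i$ whenever $X'$ is an irreducible component of $p\meno(Y_i)$. Once both are proved, $qp(X')=q(Y_i)=Z'$ follows immediately.

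For (a) the Galois structure of $q$ is crucial. Let $G$ denote the Galois group, so that $Z=Y/G$ and $q\meno(Z')$ is $G$-invariant. The group $G$ permutes $\{Y_1,\lds,Y_m\}$, and under the analytic Galois correspondence between $G$-invariant closed analytic subsets of $Y$ and closed analytic subsets of $Z$, the $G$-orbits on $\{Y_1,\lds,Y_m\}$ correspond bijectively to the irreducible components of the quotient $q\meno(Z')/G=Z'$. The irreducibility of $Z'$ thus forces a single $G$-orbit, so the $Y_i$ are all $G$-conjugate. Since $q$ is $G$-invariant, they share a common image, and this image equals $q(q\meno(Z'))=Z'$ by surjectivity of $q$.

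For (b) I would appeal to covering space theory. The restriction $p\restr{p\meno(Y_i)}:p\meno(Y_i)\ra Y_i$ is again an unramified covering, and the base $Y_i$ is irreducible hence connected. Consequently each connected component of $p\meno(Y_i)$ is open, closed, and maps surjectively onto $Y_i$. Because $p$ is a local biholomorphism and $Y_i$ is irreducible, any such connected component is locally isomorphic to $Y_i$ and therefore itself irreducible; so the irreducible and connected components of $p\meno(Y_i)$ coincide. In particular, $p(X')=Y_i$, and combined with (a) we obtain $qp(X')=Z'$.

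The main delicate point I expect is (a), namely the justification that $G$ acts transitively on the components $\{Y_i\}$. This rests on the analytic version of the Galois correspondence, which uses that $q$ is finite (so that $Y\ra Y/G$ is well-behaved) and that irreducibility is preserved when passing between $G$-invariant closed subsets of $Y$ and their images in $Z$. In (b) the subtle point is the identification of connected and irreducible components of $p\meno(Y_i)$, which is made possible by the étale / locally biholomorphic nature of $p$ over an irreducible base; both are essentially formal once properly set up.
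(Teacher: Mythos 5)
Your overall strategy --- first deal with the Galois cover $q$, then with the unramified cover $p$ --- is the same as the paper's, and your step (a) is essentially a repackaging of its argument: the paper finds one component $Y_{i_0}$ with $q(Y_{i_0})=Z'$ (via Baire category, which also spares it from having to know that there are only finitely many $Y_i$) and then uses transitivity of $G$ on fibres to get $q\meno(Z')=G\cdot Y_{i_0}$, which is exactly the ``single orbit'' you extract from your Galois correspondence. Be aware, though, that the nontrivial content of the correspondence you invoke is precisely the fact that no $q(Y_i)$ can be properly contained in another (equivalently, that some $q(Y_{i})$ already equals $Z'$); this needs the identity $q\meno(q(S))=G\cdot S$ together with maximality of components, and is not free.

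Step (b) contains a genuine gap. It is not true that the connected components of $p\meno(Y_i)$ are irreducible merely because $p$ is a local biholomorphism and $Y_i$ is irreducible: irreducibility is a global condition, and local biholomorphy only transfers \emph{local} irreducibility, which an irreducible $Y_i$ need not possess. Concretely, take $Y_i$ to be a nodal cubic (irreducible, with two local branches at the node); its connected unramified double cover obtained by gluing two copies of $\PP^1$ in a $2$-cycle (the north pole of each identified with the south pole of the other) is connected but has \emph{two} irreducible components. So ``connected components $=$ irreducible components'' fails, and your argument only shows that connected components of $p\meno(Y_i)$ surject onto $Y_i$, not that irreducible components do --- and the latter is what the lemma asserts, since in the application $Y_i$ is a component of the preimage of an arbitrary irreducible analytic subset of $\A_g^D$ and may well be singular. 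The conclusion is still true, but it needs an extra idea: the paper fixes a point $x\in X'$ lying on no other component of $(qp)\meno(Z')$, lifts the universal cover $u:\tilde{Y}'\ra Y'$ through the covering $p$ to a map $\tilde{f}:\tilde{Y}'\ra X$ with $\tilde{f}(\tilde{y})=x$, and argues that the image lands in $X'$ and covers $Y'$; alternatively one can pass to normalizations, over which connected and irreducible components do coincide. As written, your (b) does not establish $p(X')=Y_i$.
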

\begin{proof}
  Let $\{Y_i\}$ be the irreducible components of $q\meno(Z')$. We
  claim that $q(Y_i) = Z'$ for each $i$.  Since $q$ is a finite map
  each $q(Y_i)$ is an analytic subset of $Z'$.  Obviously
  $Z'=qq\meno(Z') = \cup_i q(Y_i)$.  By Baire theorem there is some
  $i_0$ such that $q(Y_{i_0})$ has non-empty interior, therefore
  $Z'=q(Y_{i_0})$.  Since $q: Y \ra Z$ is a Galois cover with finite
  Galois group $G$, it follows that $q\meno(Z') = G\cd Y_{i_0}$, so
  for each $i$ we have $Y_i = g \cd Y_{i_0}$ for some $g\in G$ and
  hence $q(Y_i) =q(Y_{i_0}) =Z'$.  This proves the claim.  Next fix a
  point $x$ of $X'$ that does not lie in any other irreducible
  component of $(q p )\meno (Z')$.  Since
  $ p (X') \subset q\meno (Z')$ and $X'$ is irreducible, $ p (X')$ is
  contained in a unique irreducible component $Y'$ of $q\meno (Z')$.
  By the above $q(Y') = Z'$. To conclude it is enough to show that
  $ p (X') = Y'$.  Set $y:= p (x) \in Y'$. Let $u: \tilde{Y'} \ra Y'$
  be the universal cover. Fix $\tilde{y} \in u \meno(y)$. By the
  lifiting theorem there is a holomorphic map
  $\tilde{f} : (\tilde{Y'}, \tilde{y}) \lra (X, x) $ such that
  $ p \tilde{f} = f : = i u $, where $i: Y' \ra Y$ denotes the
  inclusion.  Since $Y'$ is irreducible, also $\tilde{Y}'$ is
  irreducible, hence $\tilde{f}(\tilde{Y}')$ is contained in a unique
  irreducible component, which is necessarily $X'$ since
  $\tilde{f}(\tilde{y}) = x$. So $\tilde{f}(\tilde{Y}') \subset
  X'$. It follows that
  $Y' = iu (\tilde{Y}') = f (\tilde{Y}') = p \tilde{f} (\tilde{Y}')
  \subset p (X')$. On the other hand we have $ p (X') \subset Y'$ by
  construction. Hence $ p (X') = Y'$ as desired.
\end{proof}

\begin{prop}
  \label{corD}
  Let $\om$ be a symplectic form of type $D$.  Denote by
  $\pi: \sieg(V,\om) \ra \A_g^D$ the canonical projection.  If
  $W \subset \A_g^D$ is an irreducible analytic subset, then for any
  irreducible component $Z$ of $\pi\meno (W) $ we have $\pi(Z)=W$.
\end{prop}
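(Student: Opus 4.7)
The plan is to reduce the statement to Lemma \ref{lemC} by factoring $\pi$ as $\pi = q \comp p$, where $p: \sieg(V,\om) \ra Y$ is an unramified covering and $q: Y \ra \A_g^D$ is a finite Galois covering. The natural candidate for $Y$ is the quotient of $\sieg(V,\om)$ by a torsion-free normal subgroup $\Ga$ of finite index in the integer symplectic group $\Sp(\La,\om)$; such a $\Ga$ exists by a Minkowski-type argument, for instance a principal congruence subgroup of sufficiently high level.

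Since $\Ga$ is torsion-free, it acts freely and properly discontinuously on $\sieg(V,\om)$, so $Y := \sieg(V,\om)/\Ga$ is a complex manifold and the projection $p: \sieg(V,\om) \ra Y$ is an unramified covering of countably infinite degree. Normality of $\Ga$ makes $q: Y \ra \A_g^D$ the quotient by the finite group $G := \Sp(\La,\om)/\Ga$, hence a finite Galois covering of reduced complex analytic spaces. By construction $\pi = q \comp p$.

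With this factorization in hand, the proposition follows at once from Lemma \ref{lemC}: given an irreducible analytic subset $W \subset \A_g^D$ and an irreducible component $Z$ of $\pi\meno(W) = (q p)\meno(W)$, the lemma applied with $X = \sieg(V,\om)$, middle space $Y$, target $\A_g^D$, $Z' = W$ and $X' = Z$ yields $\pi(Z) = qp(Z) = W$. I do not anticipate a substantive obstacle; the only point requiring a little care is the existence of the normal torsion-free subgroup $\Ga$, which is classical, and the verification that $q$ is a genuine finite Galois covering in the sense of Lemma \ref{lemC} (i.e.\ a quotient by the finite group $G$ acting by analytic automorphisms, not necessarily freely), which is automatic.
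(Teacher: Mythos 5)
Your proposal is correct and follows essentially the same route as the paper: the authors also factor $\pi$ through the quotient $\sieg(V,\om)/\Gamma_D(n)$ by a principal congruence subgroup of sufficiently high level (with $n$ prime to $d_g$), obtaining an unramified covering followed by a finite Galois covering, and then invoke Lemma \ref{lemC} exactly as you do. No gaps.
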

\begin{proof}
  The polarisation $\omega$ is a non-degenerate alternating form
  $\omega: \Lambda^2(\Lambda) \ra \Zeta$ of type $D =(d_1,...,d_g)$,
  where $d_1|d_2|...|d_g$.  Let $n $ be a natural number such that
  $(d_g, n) =1$. Consider a symplectic level $n$ structure, i.e. a
  symplectic isomorphism of the set of $n$-torsion points $A[n]$ of
  $A = V/\Lambda$ with $({\mathbb Z}/n{\mathbb Z})^{2g}$.  Denote by
  $\Gamma_D(n)$ the subgroup of the automorphisms of the pair
  $(\Lambda, \omega)$ which induce the trivial action on
  $\Lambda/n\Lambda$.  If $n$ is large enough with respect to the
  polarisation $D=(d_1,...,d_g)$, then the quotient
  $ \siegv/ \Gamma_D(n)=: \A_g^{D,(n)}$ is smooth and the map
  $p: \siegv \ra \A_g^{D,(n)}$ is a topological covering
  (cf. e.g. \cite{gengo}). The map $\pi$ factors through the
  topological covering $p: \siegv \ra \A_g^{D,(n)}
  $ and a finite
  Galois covering $q: \A_g^{D,(n)} \ra \A_g^D$.
  The result follows from Lemma \ref{lemC}.
\end{proof}

\begin{teo}
  \label{inag}
  Let $D_1$ and $D_2$ be types of $g$-dimensional abelian varieties.
  Let $W_ 1 \subset \A_g^{D_1}$ and $W_2 \subset \A_g^{D_2}$ be a
  irreducible analytic subsets.  Assume that there is a non-empty
  subset $U $ of $W_1$ such that (1) $U$ is open in the complex
  topology, (2) any $[A_1] \in U$ is isogenous to some $[A_2]\in
  W_2$. Then $\dim W_1 \leq \dim W_2$.  Moreover if
  $\dim W_1 = \dim W_2$ and $W_2$ is a totally geodesic subvariety,
  then $W_1$ also is totally
  geodesic. 
\end{teo}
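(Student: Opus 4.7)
The plan is to lift both $W_1$ and $W_2$ from the moduli spaces to the respective Siegel upper half spaces, and then invoke Lemma \ref{lemmetto} and Proposition \ref{puff} in that setting. Fix on $V=\R^{2g}$ symplectic forms $\om_1$ and $\om_2$ of types $D_1$ and $D_2$ respectively, and let $\pi_i: \sieg(V,\om_i) \ra \A_g^{D_i}$ be the canonical projection for $i=1,2$. Choose an irreducible component $Z_i$ of $\pi_i\meno(W_i)$. By Proposition \ref{corD} we have $\pi_i(Z_i)=W_i$, and since $\pi_i$ factors as a topological covering followed by a finite map, $\dim Z_i=\dim W_i$.

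Next, set $\Omega := Z_1 \cap \pi_1\meno(U)$; this is a non-empty open subset of $Z_1$. For $J_1 \in \Omega$, the torus $T_{J_1}$ represents a point of $U$, so by hypothesis it is isogenous to some $A_2$ with $[A_2] \in W_2=\pi_2(Z_2)$. Hence there exists $J_2 \in Z_2$ with $T_{J_2} \cong A_2$, and therefore $T_{J_1}$ is isogenous to $T_{J_2}$. Viewing $Z_1$ and $Z_2$ as irreducible analytic subsets of the ambient $\co(V)$, Lemma \ref{lemmetto} then produces $a \in \Gl(\La_\QQ)$ with $\Ad a(Z_1) \subset Z_2$, which immediately yields $\dim W_1=\dim Z_1 \leq \dim Z_2=\dim W_2$.

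For the equality statement, note that $W_2$ being totally geodesic means $W_2 = \pi_2(Z_2')$ for some totally geodesic submanifold $Z_2' \subset \sieg(V,\om_2)$. The symplectic modular group acts on $\sieg(V,\om_2)$ by affine transformations of the canonical connection, so every irreducible component of $\pi_2\meno(W_2)$, in particular the chosen $Z_2$, is a translate of a component of $Z_2'$ and is therefore itself totally geodesic. Under the equality of dimensions, Proposition \ref{puff} then applies to $Z_1 \subset \sieg(V,\om_1)$ and $Z_2 \subset \sieg(V,\om_2)$, giving that $Z_1$ is totally geodesic. Consequently $W_1 = \pi_1(Z_1)$ fits the situation of Definition \ref{defE} and is a totally geodesic subvariety of $\A_g^{D_1}$.

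The main obstacle is the bookkeeping that allows Lemma \ref{lemmetto} and Proposition \ref{puff}, which are phrased at the Siegel space level, to be applied coherently when both $W_1$ and $W_2$ live in \emph{different} moduli spaces with different polarization types: this hinges on Proposition \ref{corD} to lift $W_i$ to an irreducible $Z_i$ with $\pi_i(Z_i)=W_i$ and on the fact that irreducible components of the preimage of a totally geodesic $W_2$ are again totally geodesic. Once these two facts are in place the Siegel-space statements transfer the conclusion from $W_2$ to $W_1$ essentially for free.
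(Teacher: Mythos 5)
Your proposal is correct and follows essentially the same route as the paper: lift $W_i$ to irreducible components $Z_i$ of $\pi_i^{-1}(W_i)$, use Proposition \ref{corD} to get $\pi_i(Z_i)=W_i$, apply Lemma \ref{lemmetto} to the open set $\Omega=Z_1\cap\pi_1^{-1}(U)$ for the dimension bound, and then Proposition \ref{puff} for total geodesicity. The only cosmetic difference is in the last step: the paper simply takes $Z_2$ to be the totally geodesic lift furnished by Definition \ref{defE}, whereas you justify that an arbitrary irreducible component of $\pi_2^{-1}(W_2)$ is a modular translate of such a lift and hence totally geodesic --- a harmless (and slightly more careful) elaboration of the same argument.
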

\begin{proof}
  Denote by $\pi_i : \sieg(V, \om_i) \ra \A_g^{D_i}$ the canonical
  projections.  Let $Z_i $ be an irreducible component of
  $\pi_i^{-1}(W_i)$. By Proposition \ref{corD} $\pi_i(Z_i) = W_i$.
  Set $\Omega:= Z_1 \cap \pi_1\meno(U)$.  Clearly for any
  $J_1 \in \Omega$ there is some $J_2 \in Z_2$ such that $T_{J_1} $
  and $T_{J_2}$ are isogenous.  By Lemma \ref{lemmetto} there is
  $a\in \Gl(\La_\QQ)$ such that $\Ad a (Z_1 ) \subset Z_2$. Hence
  $\dim W_1 = \dim Z_1 \leq \dim Z_2 = \dim W_2$. This proves the
  first assertion.  To prove the second, recall that by definition
  \ref{defE} we can assume that $Z_2 \subset \sieg(V,\om_2)$ is a
  totally geodesic submanifold.  By Proposition \ref{puff} $Z_1$ is
  also totally geodesic, hence the result.
\end{proof}

\begin{teo}
  \label{bau}
  Consider a datum $\Datum=\datum$ with $g'\geq 1$, $3g' + r > 3 $
  (i.e. $\dim\md >0$), and which satisifies condition
  \eqref{bona}. Then for every $y \in \Im \prym$ and for every
  irreducible component $F$ of $\prym\meno(y)$, the closure
  $W:= \overline{j(F)}$ is a totally geodesic subvariety of $\Ag$ of
  dimension $g' (g'+1) /2$.
\end{teo}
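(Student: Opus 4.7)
The strategy is to apply Theorem \ref{inag} with $W_1 := W = \overline{j(F)}$ and a carefully chosen totally geodesic reference subvariety $W_2$ of dimension $g'(g'+1)/2$, exploiting the fact that every curve $[C]$ in the fibre $F$ has its Jacobian isogenous to a product of the varying Jacobian of $C'$ with the fixed Prym $(P,\Theta_P)$. Theorem \ref{inag} will then simultaneously pin down $\dim W$ and give its totally geodesic character.

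For the dimension bound from below, note that by \eqref{W} one has $\dim \ada = N(\Datum) - g'(g'+1)/2$, and condition \eqref{bona} forces $\dim \ada = 3g'-3+r - g'(g'+1)/2$. Since $\prym$ is dominant by Theorem \ref{bau1} and $\dim \tmd = 3g'-3+r$, upper semicontinuity of fibre dimension yields $\dim F \geq g'(g'+1)/2$, hence $\dim W \geq g'(g'+1)/2$ (the map $j$ being generically injective on $F$ by Torelli). For the construction of $W_2$, observe that for every $[C]\in F$ the natural morphism $(f^{\ast},i) : JC' \times P(f) \to JC$ is an isogeny, and since the whole fibre has $(P(f),\Theta_{P(f)}) \cong (P,\Theta_P)$, the principal polarization on $JC$ pulls back via this isogeny to a product polarization of a fixed type $D_2$ on $JC' \times P$. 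Set
\[
  W_2 := \A_{g'} \times \{[P,\Theta_P]\} \subset \A_g^{D_2},
\]
embedded via direct sum of polarized abelian varieties. Then $W_2$ is irreducible of dimension $g'(g'+1)/2$, and it is totally geodesic in $\A_g^{D_2}$ because the block-diagonal inclusion $\Sp(V_1,\omega_1) \times \Sp(V_2,\omega_2) \hookrightarrow \Sp(V,\omega)$ (where $\omega = \omega_1 \oplus \omega_2$ is of type $D_2$) is a symmetric subtriple in the sense of Section~3, to which Theorem \ref{totSS}(1) applies, and the slice at the point $[P,\Theta_P]$ is trivially totally geodesic.

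With these $W_1,W_2$ the hypotheses of Theorem \ref{inag} are met with $U = W_1$: every $[JC] \in W$ is isogenous to $[JC' \times P] \in W_2$. The first conclusion of Theorem \ref{inag} yields $\dim W \leq \dim W_2 = g'(g'+1)/2$, which together with the lower bound above gives $\dim W = g'(g'+1)/2$. The second conclusion then gives that $W$ is totally geodesic, as desired. The main technical point on which I expect to spend care is verifying that the product embedding $\A_{g'} \times \mathsf{A}^\delta_{g-g'} \hookrightarrow \A_g^{D_2}$ is totally geodesic in the sense of Definition \ref{defE}, by tracing it through the Siegel-space model of Section~3; once this is established, the rest of the argument is a formal application of the results of that section.
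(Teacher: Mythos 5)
Your proposal is correct and follows essentially the same route as the paper: a lower bound on $\dim F$ from the submersivity/dominance of $\prym$ established in Theorem \ref{bau1}, and then an application of Theorem \ref{inag} against a totally geodesic product reference variety to get the matching upper bound and the totally geodesic property. The only (cosmetic) difference is that you take $W_2=\A_{g'}\times\{[P,\Theta_P]\}$ directly, whereas the paper takes the closure of the locus of $A\times j(C')$, $[C']\in\M_{g'}$, and uses $g'\leq 3$ to identify it with $\{A\}\times\A_{g'}$.
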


\proof We start by proving the dimension statement.  First we compute
the dimension of generic fibres of $\prym$.

By \eqref{bona} $\dim \M_\Datum = \dim (S^2H^0(K_C))^G$.  Fix $x$ such
that $\prym$ is submersive at $x$. Set $y=\prym(x)$. Then, using
\eqref{W} we get
\begin{equation}
  \begin{gathered}
    \dim_x \prym\meno(y) = \dim \tmd - \dim \ada
    =\\
    \dim
    (S^2H^0(K_C))^G - \dim (S^2H^{0}(K_C)^-)^G =  \\
    \dim S^2H^0(K_{C'}) = \frac{g'(g'+1)}{2} \geq 1.
    \label{dimensione}
  \end{gathered}
\end{equation}

Hence the generic fibre of $\prym$ has dimension $g'(g'+1)/2$,
therefore $\dim W = \dim F \geq g'(g'+1)/2$.

If $y=[A,\Theta] \in \Im \prym$, denote by $W' \subset \A_g^D$ the
closure of the variety parametrizing abelian varieties isomorphic to
products $A\times j(C')$, where $[C'] \in \M_{g'}$ (with $D$ denoting
the appropriate product polarization).  Observe that $W$ is an
irreducible analytic subset of $\A_g$ and $W'$ is an irreducible
analytic subset of $\A_g^D$.  Clearly the generic point of $W$ is
isogenous to some point of $W'$.

By Theorem \ref{bau1} $g'\leq 3$, so $W'$ parametrizes abelian
varieties of the form $A\times B$ with $[B]\in \A_{g'}$.  Hence $W'$
is a totally geodesic subvariety of $\A_g^D$ of dimension $g'(g'+1)/2$.
Theorem \ref{inag} implies that $\dim W \leq \dim W'$, so
$ \dim W = g'(g'+1)/2$.  This proves the dimension statement.  Now we
can apply the second part of Theorem \ref{inag} to conclude that $W$
is totally geodesic.  \qed

  \begin{remark}
    Since we know that the data satisfying the assumptions of the
    Theorem have $g'=1$, the fibres are 1-dimensional.
  \end{remark}
  
  Fix a datum $\Datum=\datum$ with $g'\geq 1$, $3g' + r > 3 $, and
  consider the map
  \begin{equation}
    \label{phi}
    \phi: \tmd \ra \A_{g'}
  \end{equation}
  which associates to $[C\ra C']$ the Jacobian $[JC']$.
  \begin{teo}
    \label{ribau}
    Consider a datum $\Datum=\datum$ with $g'\geq 1$, $3g' + r > 3 $
    (i.e. $\dim\md >0$), and which satisifies condition
    \eqref{bona}. Then for every $y \in j(\M_{g'})$ and for every
    irreducible component $Y$ of $\phi\meno(y)$, the closure
    $X:= \overline{j(Y)}$ is a totally geodesic subvariety of $\Ag$ of
    dimension $d:=N(\Datum) - g' (g'+1) /2$.
  \end{teo}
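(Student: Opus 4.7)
The plan is to parallel the proof of Theorem \ref{bau}, with the Prym map replaced by $\phi$ and the reference subvariety replaced by a product involving $\{[JC']\}$ and the Shimura variety $\ada$.

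For the dimension lower bound, I would factor $\phi$ through the Torelli map as $\tmd \xrightarrow{\psi} \M_{g'} \xrightarrow{j} \A_{g'}$, with $\psi([C \ra C']) = [C']$. Since $\psi$ is dominant (the branch points of the cover can be varied freely over any fixed base curve) and Theorem \ref{bau1} gives $g' \leq 3$, so that $\overline{j(\M_{g'})} = \A_{g'}$ has dimension $g'(g'+1)/2$, the map $\phi$ is dominant onto $j(\M_{g'})$ with generic fibre of dimension
\begin{equation*}
\dim \tmd - g'(g'+1)/2 \;=\; N(\Datum) - g'(g'+1)/2 \;=\; d
\end{equation*}
using condition \eqref{bona}. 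Upper semicontinuity of fibre dimension then gives $\dim Y \geq d$, and hence $\dim X \geq d$.

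For the reverse inequality and the totally geodesic conclusion I would run the isogeny comparison of Theorem \ref{inag}. For every $[C] \in Y$ the norm map $\Nm: JC \ra JC'$ produces an isogeny $JC \sim JC' \times P(f)$, where $P(f)$ is the Prym variety. Hence $j([C])$ is isogenous to a point of
\begin{equation*}
W' \;:=\; \{[JC']\} \times \ada \;\subset\; \A_g^D,
\end{equation*}
with $D$ the appropriate product polarisation type. Being the product of a single point and a Shimura subvariety, embedded via the standard inclusion $\A_{g'} \times \A_{g-g'}^\delta \hookrightarrow \A_g^D$, the locus $W'$ is totally geodesic in $\A_g^D$ of dimension $\dim \ada = d$, the last equality coming from the fibre-dimension computation \eqref{dimensione} for $\prym$.

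Finally I would apply Theorem \ref{inag} to $X$ and $W'$: its first part yields $\dim X \leq \dim W' = d$, which combined with the lower bound gives $\dim X = d$, and its second part then concludes that $X$ is a totally geodesic subvariety of $\A_g$. The one technical point requiring care is verifying that the product $\{[JC']\} \times \ada$ is indeed totally geodesic inside $\A_g^D$: this follows from the totally geodesic nature of the standard embedding $\A_{g'} \times \A_{g-g'}^\delta \hookrightarrow \A_g^D$ of polarised Siegel domains, but it is the one step that deserves to be spelled out explicitly.
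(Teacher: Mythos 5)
Your proposal is correct and follows essentially the same strategy as the paper's proof: every point of $j(Y)$ is isogenous to a point of the totally geodesic product $\{[JC']\}\times \ada$ of dimension $N(\Datum)-g'(g'+1)/2$, and Theorem \ref{inag} then gives both the dimension upper bound and the totally geodesic conclusion. The only (harmless) deviation is in the lower bound on $\dim Y$: the paper obtains the surjectivity of $d\phi_x$ from the injectivity of the multiplication map on $S^2H^0(K_{C'})$ as in \eqref{poldo}, whereas you use the softer fact that $\phi$ factors as a dominant map onto $\M_{g'}$ followed by the Torelli map, which is dominant onto $\A_{g'}$ because $g'\leq 3$ by Theorem \ref{bau1}; both arguments are valid.
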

  \begin{proof} In the proof of Theorem \ref{bau1} we have shown that
    if $(*)$ holds, for the generic point $x= [C\ra C'] \in \tmd$ the
    multiplication map
    \begin{gather}
      \label{poldo}
      m:  (S^2H^0(K_C))^G    \cong S^2H^0(K_{C'}) \oplus (S^2H^{0}(K_C)^-)^G  \ra H^0(2K_C)^G
    \end{gather}
    is an isomorphism. This implies that the restriction of $m$ to
    $S^2H^0(K_{C'})$ is injective. Since this is the codifferential of
    $\phi$ at $x$, we have proved that
$$d\phi_x: H^1(T_C)^G \ra S^2H^0(K_{C'})^*$$
is surjective.  Hence the dimension of the generic fibre
$\phi^{-1}(y)$ is $N(\Datum) - \frac{g'(g'+1)}{2}$, so
$\dim(X) = \dim(Y) \geq N(\Datum) - \frac{g'(g'+1)}{2}$.

Let $y=[J(C'),\Theta] \in j(\M_{g'})$, denote by $X' \subset \A_g^D$
the closure of the variety parametrizing abelian varieties isomorphic
to products $ j(C') \times B$, where $B \in \ada$ (with $D$ denoting
the appropriate product polarization).  Since $\ada$ is a Shimura
subvariety of $\mathsf{A}^\delta_{g-g'}$, $X'$ is a totally geodesic
subvariety of $ \A_g^D$. Its dimension equal $\dim \ada$. As noted in
the proof of Theorem \ref{bau1}
$T_{\prym(x)}^* \ada \cong \left ( S^2H^0(C,K_C)^- \right)^G $.  By
\eqref{poldo} $\dim \ada = \dim \left (S^2H^0(C,K_C) \right)^G - $
$ \dim S^2H^0(K_{C'}) = N(\Datum) - \frac{g'(g'+1)}{2}$.  The
generic point of $X$ is isogenous to some point of $X'$, so by Theorem
\ref{inag} $\dim X \leq \dim X'$, therefore
$\dim X = N(\Datum) - \frac{g'(g'+1)}{2}$.  This proves the dimension
statement.  Now we can apply the second part of Theorem \ref{inag} to
conclude that $X$ is totally geodesic.
\end{proof}

  \begin{remark}
    Since we know that the data satisfying the assumptions of the
    Theorem have $g'=1$, we have in fact $d= N(\Delta) -1 = r -1 $.
  \end{remark}

  \begin{remark}
    Theorems \ref{bau} and \ref{ribau} deal with fibres of $\prym$ and
    $\phi$.  One can apply Theorem \ref{inag} also to
    $Z:=\overline{j(\M_\Datum)}$ as a whole. In this case, remembering
    Proposition \ref{puff}, one gets an isomorphism
    $a \in \Gl(\La_\QQ)$ that maps a lifting to Siegel space of $Z$  to
    an appropriate lifting of $\A_1\times \ada$. (This proves again
    that $Z$ is totally geodesic.)  Both $Z$ and $\A_1\times \ada$
    have a \enf{product structure}: $\A_1\times \ada$ has the natural
    projections $\pi_i$ on the factors, while $Z$ has the maps $\phi$
    and $\prym$.  It is natural to ask whether this
    $a \in \Gl(\Lambda_\QQ)$ can be chosen in such a way that
    $\pi_1 \circ a = \phi$ and $\pi_2 \circ a = \prym$. This means
    that (at the level of Siegel space!) $a(y)= (\phi(y), \prym(y))$.
    It seems unlikely that such a map can be gotten by methods similar
    to those of Theorem \ref{inag}.  Since we are dealing with only 6
    families an explicit analysis could in principle answer this
    question. Yet this is probably non-trivial.
\end{remark}

\section{Analysis of the families fibred in totally geodesic
  subvarieties}
\label{sec:examples}

There are exactly 6 families with $g'=1$ that satisfy $(*)$
(i.e. $N(\Delta) =r$).  The purpose of this section is to give a
detailed analysis of these families from the point of view of the
fibrations $\prym$ and $\phi$.  The 6 families are the following:
\begin{itemize}
\item[(1e)] $g = 2$, $G = \Zeta/2\Zeta$, $N=r=2$.
\item [(2e)] $g =3$, $G= \Zeta/2\Zeta$, $N=r=4$.
\item [(3e)] $g =3$, $G= \Zeta/3\Zeta$, $N=r=2$.
\item [(4e)] $g =3$, $G= \Zeta/4\Zeta$, $N=r=2$.
\item [(5e)] $g =3$, $G= Q_8$, $N=r=1$.
\item [(6e)] $g =4$, $G= \Zeta/3\Zeta$, $N=r=3$.
\end{itemize}
All these families except, (2e) and (6e), yield Shimura subvarieties
which can be obtained also using coverings of $\PP^1$.  In fact in
\cite{fpp} it was shown that:

\begin{itemize}
\item (1e) gives the same subvariety as (26) of Table 2 in \cite{fgp}
  (this was already found in \cite{moonen-oort}).
\item (3e) gives the same subvariety as (31) of Table 2 in \cite{fgp}.
\item (4e) gives the same subvariety as (32) of Table 2 in \cite{fgp}.
\item (5e) gives the same subvariety as (34)=(23)=(7) of Table 2 in
  \cite{fgp} (see also Table 1 in \cite{fgp} to see that these
  families are the same).
\item Apart from (1e), none of these families is contained in the
  hyperelliptic locus.
\end{itemize}

\begin{cor}
  Families \emph{(1e), (2e), (3e), (4e), (6e)} are fibred in totally
  geodesic curves via their Prym maps and are fibred in totally
  geodesic subvarieties of codimension 1 via the map $\phi$.
  Therefore they contain infinitely many totally geodesic subvarieties
  and countably many Shimura subvarieties.  The Prym map of family
  \emph{(5e)} is constant, its image is the square of the elliptic
  curve $y^2 = x^3 - x$, i.e. the elliptic curve with lattice
  $\Zeta + i \Zeta$.
\end{cor}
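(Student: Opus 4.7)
The plan is to treat the corollary in three parts: the fibration structure of families (1e)--(4e) and (6e), the existence of countably many Shimura fibres, and the separate analysis of (5e). For the first part, I would apply Theorems \ref{bau} and \ref{ribau} directly. In all of (1e), (2e), (3e), (4e), (6e) the base has genus $g'=1$ with $N(\Datum)=r\geq 2$, so $g'(g'+1)/2=1$ and $\dim\tmd = 3g'-3+r=r$. Theorem \ref{bau} gives that every irreducible component of a fibre of $\prym$ is a totally geodesic curve in $\Ag$, while Theorem \ref{ribau} gives that every irreducible component of a fibre of $\phi$ is totally geodesic of dimension $N(\Datum)-1=r-1$, i.e.\ of codimension one in $\tmd$.

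For the countably many Shimura subvarieties I would invoke the Mumford--Moonen criterion: a totally geodesic subvariety of $\Ag$ is Shimura iff it contains a CM point. Since the generic fibre of $\phi:\tmd\ra \A_1$ has codimension one, $\phi$ is dominant, so its image meets the dense countable set of CM points of $\A_1$. For such a CM point $[E_0]$, the restriction of $\prym$ to $\phi\meno([E_0])$ lands in $\ada$, which is itself a Shimura variety with a dense countable set of CM points; a preimage in the fibre whose Prym is CM then has Jacobian isogenous to $E_0\times P(f)$ and hence CM, producing a CM point inside $\phi\meno([E_0])$. This gives countably many Shimura fibres of $\phi$; the symmetric argument applied to the Prym map yields countably many Shimura fibres of $\prym$. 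The remaining uncountably many fibres are totally geodesic but not Shimura in general.

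Family (5e) must be handled separately because $r=1$ forces $\dim\tmd=1$ and the previous argument degenerates. To show $\prym$ is constant I would reprise the cotangent computation from the proof of Theorem \ref{bau1}: condition \eqref{bona} reads $\dim(S^2H^0(K_C))^G = 1$, and together with the splitting $(S^2H^0(K_C))^G\cong S^2H^0(K_{C'})\oplus (S^2H^0(K_C)^-)^G$ and $\dim S^2H^0(K_{C'})=1$ this forces $(S^2H^0(K_C)^-)^G=0$. Since the latter is the cotangent space to $\ada$ at $\prym(x)$, $\ada$ is zero-dimensional and $\prym$ is constant. The main obstacle is the explicit identification of the image as $E\times E$ with $E=\C/(\Zeta+i\Zeta)$: the induced $Q_8$-action on $P(f)$ embeds the rational quaternion algebra $\bigl(\tfrac{-1,-1}{\QQ}\bigr)$ into $\End(P(f))\otimes\QQ$, which on a $2$-dimensional abelian variety forces $P(f)\sim E\times E$ with $\End(E)\otimes\QQ\cong\QQ(i)$; the cleanest way to pin down the isomorphism class is probably to pick a distinguished member of the one-parameter family and compute its Prym directly from the monodromy data.
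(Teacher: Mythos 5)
Your treatment of (1e)--(4e) and (6e) is exactly the paper's: apply Theorems \ref{bau} and \ref{ribau} with $g'=1$, so the fibres of $\prym$ are totally geodesic curves and the fibres of $\phi$ are totally geodesic of dimension $r-1$. For the countably many Shimura fibres the paper argues more directly than you do: each of these families is already a Shimura subvariety of $\A_g$, hence contains a countable dense set of CM points, and any fibre of $\prym$ or $\phi$ through such a point is a totally geodesic subvariety containing a CM point, hence Shimura by Mumford--Moonen. Your route (a CM point $[E_0]$ of $\A_1$ downstairs, then a CM Prym upstairs) reaches the same conclusion but has a small unaddressed step: to produce a point of the \emph{particular} fibre $\phi\meno([E_0])$ whose Prym is CM you need $\prym$ restricted to that fibre to have dense image in $\ada$, which is guaranteed only for fibres meeting the locus where $(\phi,\prym)$ is submersive; this is repairable (choose $[E_0]$ generic among CM points) but not free. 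Your argument that $\prym$ is constant on (5e), via $(S^2H^0(K_C)^-)^G=0$, is equivalent to the paper's remark that the $1$-dimensional family coincides with a fibre of its own Prym map.

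The genuine gap is the identification of the constant Prym image for (5e). The assertion that an embedding $\bigl(\tfrac{-1,-1}{\QQ}\bigr)\hookrightarrow\End(P(f))\otimes\QQ$ \emph{forces} $P(f)\sim E\times E$ with $\End(E)\otimes\QQ\cong\QQ(i)$ is false in its second half: a totally definite quaternion algebra does exclude simple abelian surfaces and products of non-isogenous elliptic curves, so $P(f)\sim E^2$ with $E$ of CM type, but $\bigl(\tfrac{-1,-1}{\QQ}\bigr)$ embeds into $M_2(K)$ for \emph{every} imaginary quadratic $K$ in which $2$ is non-split (e.g. $K=\QQ(\sqrt{-3})$, where $2$ is inert), so the quaternionic action alone does not single out $j=1728$. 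You acknowledge this by deferring to a direct computation on a distinguished member of the family, but that computation is precisely the nontrivial content of this part of the statement and is not carried out. The paper performs it in Proposition \ref{sskk}, using the presentation (34) of the same family: there $JC\sim B_6\times B_{10}^2$ with $P(f)\sim B_{10}^2$, and $B_{10}=C/\langle g_1\rangle$ is computed, by tracking branch points and monodromy through the tower $C\ra C/\langle g_1\rangle\ra C/\langle g_1,g_3^2\rangle\ra C/\langle g_1,g_3\rangle\ra C/G$, to be the double cover of $\PP^1$ branched over $\{0,1,-1,\infty\}$, i.e. $y^2=x(x^2-1)$ with lattice $\Zeta+i\Zeta$. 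Some such explicit determination is unavoidable here.
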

\begin{proof} This follows immediately from Theorems \ref{bau} and
  \ref{ribau}. Since all these families yield Shimura subvarieties of
  ${\mathsf A}_g$, they contain countably many CM points, hence the
  fibres of the two maps $\prym$ and $\phi$ passing through these
  points are Shimura subvarieties. Since family (5e) is one
  dimensional, it is itself a fibre of its Prym map $\prym$, which
  therefore is constant. The computation of this constant abelian
  surface is given below in the proof of Proposition \ref{sskk}, while
  analysing the family (5e)=(7)=(23)=(34).
\end{proof}

Now we will give all the possible inclusions between the families of
Galois covers of $\PP^1$ or of genus 1 curves known so far and listed
in Tables 1,2 of \cite{fgp} and in \cite{fpp}, yielding Shimura
varieties.  In this way we will also check which of these families are
contained in a fibre of the Prym map $\prym$ or of the map $\phi$ of
one of the 6 families above.

In genus 2 we have the following diagram, where the arrows denote the
inclusions. In the lowest line we have the one dimensional families,
family (1e) $= $(26) has dimension 2, while family (2) has dimension 3
and is ${\mathsf M}_2$.
\begin{equation*}
  \begin{tikzcd}
    &       (2)  & \\
    &     (1\mathrm{e})=(26)  \arrow {u} &  \\
    (3)=(5)=(28)=(30)\arrow{ur} & & (4) = (29)\arrow {ul} &\\
  \end{tikzcd}
\end{equation*}

\begin{prop}
  In genus 2 families (3)=(5)=(28)=(30) and (4) = (29) are not
  contained in any fibre of $\prym$, nor in any fibre of $\phi$ of the
  family (1e).
\end{prop}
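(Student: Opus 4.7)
The plan is the following. Fix $N\in\{(3),(4)\}$; $N$ is a $1$-dimensional Shimura subvariety of $\A_2$ contained in the image of family $(1e)$, so for every $C\in N$ the bielliptic involution $\iota$ determines elliptic curves $E':=C/\iota$ and $P:=P(f)$ with $JC\sim E'\times P$. I want to show that both restrictions $\phi|_N$ and $\prym|_N$ to $\A_1$ are non-constant; since $N$ is irreducible of dimension~$1$, this is equivalent to $N$ not being contained in any fibre of either map.

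The key structural input, which I would extract from the explicit descriptions of $N$ in \cite{fgp, fpp}, is that for generic $C\in N$ the Jacobian splits, up to isogeny, as a \emph{square}, $JC \sim E\times E$ for some $E=E(C)\in\A_1$; this defines a morphism $\mu:N\to\A_1$, $C\mapsto[E]$. The bielliptic structure provides a rank-$2$ idempotent decomposition of $\End^0(JC)$, while the Galois cover $C\to \PP^1$ contributes an automorphism of $JC$ of order $\geq 3$ (order $3$ for $(3)$, order $4$ for $(4)$); the smallest $\QQ$-algebra containing both is $M_2(\QQ)$, and this is the generic $\End^0(JC)$, forcing the square isogeny. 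Moreover $\mu$ is non-constant: were it constant, $JC$ would lie in a single isogeny class as $C$ varies over $N$, hence in a countable subset of $\A_2$, contradicting the $1$-dimensionality of $N$ in $\A_2$.

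Granted this input, the argument is immediate. Suppose $N\subset\phi\meno([E_0])$ for some $E_0\in\A_1$. Then $E'=E_0$ for every $C\in N$, and comparing the two isogeny decompositions $JC\sim E_0\times P$ and $JC\sim E\times E$ gives $E\sim E_0$. Hence $\mu(N)$ is contained in the isogeny class of $[E_0]$ in $\A_1$, which is countable. Being an irreducible closed algebraic subvariety of $\A_1$ contained in a countable subset, $\mu(N)$ must be zero-dimensional, i.e.\ a single point; this forces $\mu$ to be constant, a contradiction. The case $N\subset\prym\meno([P_0])$ is entirely symmetric, using $P\sim E$ in place of $E'\sim E$.

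The main obstacle is establishing the square isogeny $JC\sim E\times E$ together with the generic equality $\End^0(JC)=M_2(\QQ)$ for each of the six group-theoretic realizations in \cite{fgp,fpp}. This is a finite computation using Chevalley--Weil to determine the isotypic decomposition of $H^0(K_C)$ under the relevant cyclic group, combined with the bielliptic constraint; the spirit is analogous to the computation carried out in Lemma~\ref{etale}.
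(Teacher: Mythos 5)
Your proof is correct and is essentially the paper's own argument: the paper's one-sentence proof rests on exactly the fact you isolate, namely that the Jacobians in families $(29)$ and $(30)$ are isogenous to the self-product of an elliptic curve (which the paper simply cites from \cite{cfgp}, p.~16, rather than re-deriving via the endomorphism algebra), and the countability/irreducibility deduction you spell out is precisely what the paper leaves implicit.
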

\proof Since both the curves in family $(30)$ and the ones in family
$(29)$ have jacobians that are isogenous to the self product of an
elliptic curve (see e.g. \cite{cfgp} p. 16), none of these two
families are fibres of the Prym map of (1e), nor of the map $\phi$ of
(1e).  \qed

Note that the bielliptic locus in genus 2 has codimension 1 in ${\mathsf M}_2$ and it is totally geodesic in ${\mathsf A}_2$. This only happens in genus 2. In fact in \cite{fgp} it is shown that if $g \geq 3$ and $Y \subset {\mathsf M}_g$ is an irreducible divisor, then there is no proper totally geodesic subvariety of ${\mathsf A}_g$ containing $j(Y)$. \\

In genus 3 we have the following diagram. In the lowest line we have
the one dimensional families, in the second one the two dimensional
ones, family (27) has dimension 3 and (2e) has dimension 4.

\begin{equation*}
  \begin{tikzcd}
    &    &          (2e) &     \\
    &    &       &   (27) \arrow {ul}     \\
    (6) & (8) & (31) = (3\mathrm{e})\arrow{uu} & (32) = (4\mathrm{e})  \arrow{u}   \\
    (9 ) \arrow{u} \arrow[bend left]{uuurr} & (22) \arrow{u} \arrow
    {uuur} & (33)=(35) \arrow{u} \arrow{ur} & (7) =
    (23)=(34)=(5\mathrm{e}) \arrow{u}
  \end{tikzcd}
\end{equation*}

\medskip

  \begin{prop}\label{sskk}
    \leavevmode
    \begin{enumerate}
    \item [i)] Family \emph{(34)} is a fibre of the Prym map of the
      bielliptic locus \emph{(2e)} and also a fibre of the Prym map of
      \emph{(4e)}.
    \item [ii)] Families \emph{(9)} and \emph{(22)} are both contained
      in fibres of the map $\phi$ of \emph{(2e)} and are not contained
      in any fibre of the map $\prym$ of \emph{(2e)}.
    \item [iii)] Family \emph{(33) =(35)} is not contained in any
      fibre of $\phi$ nor in any fibre of $\prym$ of
      \emph{(2e)},\emph{(3e)} or \emph{(4e)}.
    \item [iv)] Families \emph{(31) = (3e)} and \emph{(32) = (4e)} are
      not contained in fibres of the map $\phi $ of \emph{(2e)}.
    \item [v)] Family \emph{(27)} is not contained in a fibre of the
      map $\phi $ of \emph{(2e)}.
    \end{enumerate}
  \end{prop}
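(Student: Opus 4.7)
The plan is to combine the dimension counts furnished by Theorems \ref{bau} and \ref{ribau} with the explicit monodromy data of the six families and the Jacobian decompositions tabulated in \cite{fgp, cfgp, fpp}, in the spirit of the genus $2$ proposition just proved.

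First, since every datum under consideration has $g'=1$, Theorem \ref{bau} gives that each irreducible component of a fibre of $\prym$ has dimension $g'(g'+1)/2=1$, while Theorem \ref{ribau} gives that each irreducible component of a fibre of $\phi$ has dimension $N(\Datum)-1=r-1$. Consequently the $\phi$-fibres of (2e), (3e), (4e) have dimensions $3,\,1,\,1$ respectively, and all three $\prym$-fibres have dimension $1$. This immediately disposes of part of (iii): the two-dimensional family (33)=(35) cannot sit inside any one-dimensional subvariety, so it is contained neither in any fibre of $\phi$ or $\prym$ of (3e), (4e) nor in any $\prym$-fibre of (2e).

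For (i), the diagram yields $(34)\subset(4e)\subset(27)\subset(2e)$ and (34) is one-dimensional while the $\prym$-fibres of (4e) and (2e) are one-dimensional as well. It is therefore enough to prove that the (4e)- and (2e)-Prym varieties are constant along (34). Write $f:C\to C/Q_8=C'$ for the (5e)-cover; by the preceding corollary $P(f)$ is constant and isomorphic to the square of the CM elliptic curve $y^{2}=x^{3}-x$. Now let $H$ be either $G_{(4e)}=\Zeta/4$ or $G_{(2e)}=Z(Q_8)=\Zeta/2$; a Riemann-Hurwitz computation shows that in either case the intermediate cover $C/H\to C'$ is \'etale between elliptic curves, so $J(C/H)$ is isogenous to $JC'$ and the standard decomposition forces $P(C\to C/H)$ to be isogenous to $P(f)$, hence constant. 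This makes (34) an entire irreducible component of a $\prym$-fibre in both cases.

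The remaining items (ii), the (2e)-part of (iii), (iv), and (v) all reduce to deciding whether the elliptic quotient $C/G_{(2e)}$ (for inclusion in a $\phi$-fibre) or the associated Prym (for inclusion in a $\prym$-fibre) is constant along the given subfamily. For the one-parameter families (9) and (22) of (ii) I would use the isogeny decompositions of $JC$ tabulated in \cite{cfgp} together with the monodromy data of \cite{fpp} to single out the elliptic factor corresponding to $C/G_{(2e)}$, verify that it is a fixed CM elliptic curve (so (9) and (22) lie in a $\phi$-fibre) and then exhibit a non-constant factor of the (2e)-Prym, ruling out inclusion in any $\prym$-fibre. For the positive-dimensional subfamilies (33)=(35), (31)=(3e), (32)=(4e) and (27) of (iii)-(v), the same tabulated decompositions are used to show that the elliptic factor $C/G_{(2e)}$ actually moves as $[C]$ varies, so no inclusion in a $\phi$-fibre is possible.

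The main obstacle I anticipate is precisely this last, case-by-case step: for each pair of families one must identify the embedding of $G_{(2e)}$ inside the larger monodromy group and translate that embedding into an isotypic decomposition of $JC$ that isolates the elliptic factor $C/G_{(2e)}$. Dimension considerations alone handle the easy half of the argument, but distinguishing a constant elliptic factor from a varying one in the remaining cases genuinely requires the explicit data recorded in Tables 1 and 2 of \cite{fgp} and in \cite{cfgp, fpp}.
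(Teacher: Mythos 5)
Your overall strategy --- reduce each item to deciding whether the relevant elliptic quotient or the relevant Prym factor is constant along the given subfamily, using the explicit monodromy data and the group algebra decomposition of $JC$ --- is the same as the paper's, and your treatment of (i) via the constancy of the $Q_8$-Prym is a reasonable repackaging of the paper's computation for $(34)$. There are, however, two genuine problems.

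First, your ``immediate'' disposal of most of part (iii) rests on a wrong dimension count: family $(33)=(35)$ is \emph{one}-dimensional, not two-dimensional (it sits in the lowest line of the genus~$3$ inclusion diagram together with $(9)$, $(22)$ and $(34)$). Since the $\prym$-fibres of (2e), (3e), (4e) and the $\phi$-fibres of (3e), (4e) are all one-dimensional, while the $\phi$-fibres of (2e) are three-dimensional, dimension considerations exclude nothing in (iii). The actual argument is that for $(35)$ one has $JC\sim B_4^{3}$ with $B_4$ isogenous to the elliptic quotient $E=C/\langle g_2\rangle$; the family is one-dimensional and $E$ moves, so every isogeny factor of $JC$ moves, and hence neither the base elliptic curve of any of the covering structures nor any Prym can be constant along $(35)$. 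That computation cannot be replaced by a dimension count.

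Second, for items (ii), (iv) and (v) you describe what you ``would'' verify rather than verifying it, and the content of the proposition lies entirely in those verifications: identifying the subgroup $H\subset G$ realizing each inclusion, checking $g(C/H)=1$ by Riemann--Hurwitz from the monodromy, computing $H^0(C/H,K_{C/H})$ as an isotypic piece of $H^0(C,K_C)$, and reading off from the group algebra decomposition which factors move. For instance, for $(9)$ one needs $H=\langle g_1\rangle$, $H^0(E,K_E)=V_5$, and $JC\sim B_4\times B_5$ with $B_5\sim E$ fixed and $B_4\sim P(C,E)$ two-dimensional and moving; none of this follows from the general theorems, and constancy of the elliptic factor (not any CM property) is what is needed. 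Finally, in (i) your cancellation of isogeny factors (from $J(C/H)\sim JC'$ to $P(C\to C/H)\sim P(f)$) is legitimate only because the simple factors $B_6$ and $B_{10}$ are generically non-isogenous; this should be stated. As written, the proposal is a plausible plan containing one false step, not a proof.
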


  \proof

  In the following, for the description of the families of \cite{fgp},
  we will denote by $\theta: \Ga_{0,r} \ra G$ the monodromy, we set
  $x_i = \theta(\gamma_i)$, $i =1,...,r$ and ${m}=(m_1, \lds, m_r)$,
  where $m_j=o(x_j)$.  For simplicity we will just write
  $x=(x_1,...,x_r)$ to describe the mondromy. We will also use the
  notation of \verb|MAGMA| for the irreducible representations of the
  group $G$.  We will analyse the different families one at a time.
  \\

\noindent
\textbf{(5e)=(7)=(23)=(34)}.\\
The description of (34) in \cite{fgp} is as follows:
\begin{gather*}
  \begin{aligned}
    G=\mathbb{Z}/4 \times\mathbb{Z}/2\rtimes\mathbb{Z}/2= \langle & g_{1},g_{2},g_{3}: g_{1}^{2}=g_{2}^{2}=g_{3}^{4}=1, \\
    &g_{2}g_{3}= g_{3}g_{2}, g_1^{-1}g_{2}{g_{1}}=g_{2}g_{3}^{2},
    g_1^{-1}g_{3}{g_{1}}=g_{3}\rangle,
  \end{aligned}
  \\
  m=(2,2,2,4), \quad x = (g_1, g_1g_2g_3^3, g_2g_3^2, g_3^3).
\end{gather*}
\noindent
Using the notation of \verb|MAGMA|, we have
$H^{0}(C, K_{C})\cong V_{6}\oplus V_{10}$, where $V_i$ are irreducible
representations of $G$ and $\dim(V_6)=1$, $\dim V_{10} = 2$. Moreover
$(S^{2}H^{0}(C, K_{C}))^{G}\cong S^{2}V_{6}$, which has dimension 1,
hence condition $(*)$ holds.  The group algebra decomposition for the
curves $C$ in the family (see \cite{cr}, \cite{rojas}), gives us a
decomposition of the Jacobian $JC$ up to isogeny:
$JC\sim B_{6}\times B_{10}^{2}$, where both the $B_{i}$'s have
dimension one.
\\
\noindent
Choose $H=\langle g_{3}^{2}\rangle$ and consider the map
$f: C \ra C/H$.  Call $z_1,...,z_4$ the critical values of the map
$\psi: C \ra C/G$. One immediately verifies that there are only four
critical points of index two for $f$, all placed in the fibre
$\psi^{-1}(z_{4})$.  Applying Riemann-Hurwitz formula, we obtain
$g(C/H)=1$. Notice that this gives us the inclusion of (34) in (2e).
\\
Denote by $E:=C/H$, we get $B_{6}\sim E$, and
$H^{0}(E, K_{E})=H^{0}(C, K_{C})^{H}=V_{6}$. So the curve $E$ moves,
and the Prym variety $P(C,E) \sim B_{10}^{2}$.  So the Prym variety
$P(C,E)$ doesn't move, hence family (34) is a fibre of the Prym map of
the family (2e).
\\
Now we consider the normal subgroup
$Q_{8}=\langle g_{3}^{2},g_{2}g_{3}, g_{1}g_{3}, g_{1}g_{2}g_{3}^{2}
\rangle$ which is isomorphic to the quaternion group.  The degree 8
map $C\rightarrow C/Q_{8}$ has a single branch point and the quotient
$C/Q_{8}$ has genus 1. So (34)= (5e).
\\
Now we show that this family is also a fibre of the Prym map of family
(4e).  In fact, consider the subgroup $H'=\langle g_{1}g_{3} \rangle $
of order 4, which contains $H$, and the map $f': C \ra C/H'$. One can
check that $f'$ has four critical points of order two and two critical
values and $C/H'$ has genus 1.  This provides the inclusion
(34)$\subset$(4e).  Moreover $H^{0}(C, K_{C})^{H'}=V_{6}$, so the
curve $E'=C/H'$ moves and it is isogenous to $E = C/H$ while $P(C,E')$
is fixed and it is isogenous to $P(C,E)$. Hence (34) is also a fibre
of the Prym map of (4e).  Finally one can see that the elliptic curve
$B_{10}$ is obtained as the quotient $C/\langle g_1 \rangle$.  The
elliptic curve $B_{10}$ is the curve with j-invariant equal to $1728$
that has Legendre equation: $y^2 = x(x^2-1)$. To check this, consider
the commutative diagram
\begin{equation*}
  \begin{tikzcd}
    & &  &  C/\langle g_1 \rangle = B_{10} \arrow{d}{\eps} \\
    C \arrow{dd}{\psi} \arrow[end anchor = west,bend left]
    {urrr}{\gamma} \arrow{rrr}{\alpha}
    \arrow[end anchor=  west]{drrr}{\phi}& & & C/\langle g_1, g_3^2 \rangle = \PP^1 \arrow{d}{h}\\
    & & & C/\langle g_1, g_3 \rangle =\PP^1 \arrow[swap,bend left, start anchor = south , end anchor = east]{dlll}{\pi}\\
    C/G=\PP^1 & & &
  \end{tikzcd}
\end{equation*}
Assume that the critical values of $\psi$ are
$[z_1=\lambda, z_2 = 0, z_3 = \infty, z_4 = 1]$ with corresponding
monodromy $(g_1, g_1g_2g_3^3, g_2g_3^2, g_3^3).$ Then we can assume
that the map
$\pi: C/\langle g_1, g_3 \rangle \cong \PP^1 \ra C/G \cong \PP^1$ is
$z \mapsto z^2$. Hence the critical values of $\phi$ are
$[\mu, -\mu, 1, -1]$, where $\mu^2 = \lambda$, and corresponding
monodromy $(g_1, g_1 g_3^2, g_3, g_3)$. We can assume that
$h(z) = \frac{z^2 + c}{z^2 - c}$, where $c = \frac{1 + \mu}{\mu -1}$,
so the critical values of $\alpha$ are
$[\frac{1 + \mu}{\mu -1}, -\frac{1 + \mu}{\mu -1}, 1, -1, \infty, 0]$
with monodromy $(g_1, g_1, g_1g_3^2, g_1 g_3^2, g_3^2, g_3^2)$. So
finally we see that the critical values of the double cover
$\eps: B_{10} = C/\langle g_1 \rangle \ra C/\langle g_1, g_3^2\rangle
\cong \PP^1$ are $1, -1, \infty, 0$, thus $B_{10}$ has equation
$y^2 = x(x^2 -1)$.
\\

\noindent
\textbf{(33)=(35)}.\\
$G=S_{4}$,  set $g_{1}=(12)$, $g_{2}=(123)$, $g_{3}=(13)(24)$ and $g_{4}=(14)(23)$. Monodromy: $(x_1,x_2,x_3,x_4)= (g_{1}g_{2}^{2},g_{3}g_{4}, g_{1}, g_{2}^{2}g_{4})$, $m=(2,2,2,3)$, $H^{0}(C, K_{C})\cong V_{4}$ ($V_4$ is an irreducible representation of dimension 3), so $ (S^{2}H^{0}(C, K_{C}))^{G}\cong (S^{2}V_{4})^G $. \\
Considering the group algebra decomposition of the Jacobians of these curves we obtain $JC\sim B_{4}^{3}$, where $B_{4}$ is an elliptic curve. \\
Let's describe it. Take the subgroup $H=\langle g_{2}\rangle$, which
has order three, and consider the quotient map $f$. We have two
critical points of $f$ in $\psi^{-1}(z_{4})$ of multiplicity equal to
three. Applying Riemann-Hurwitz formula we get an elliptic curve
$E:=C/H$. We can see that $H^0(E, K_E)$ is a one dimensional subspace
of $V_{4}$, hence $E$ is isogenous to $B_{4}$. The family (35) is one
dimensional so $E$ moves, hence family (35) is not a fibre of the Prym
map of (2e) and it is not contained in a fibre of the map $\phi$ of
(2e).
Finally, observe that $f$ has exactly two critical values. This shows  the inclusion (35)$\subset$(3e). \\
\\
\textbf{(9)}\\
$G:=\mathbb{Z}/6= \langle g_{1},g_{2} \ | \
g_{1}^{2}=g_{2}^{3}=1\rangle$, monodromy:
$(g_{1}, g_{2}^{2}, g_{2}^{2}, g_{1}g_{2}^{2} )$, $m=(2,3,3,6),$
$H^{0}(C, K_{C})\cong V_{4}\oplus V_{5}\oplus V_{6}$ (the $V_i$'s are
irreducible representations of $G$) and
$ (S^{2}H^{0}(C, K_{C}))^{G}\cong V_{4}\otimes V_{6}. $
\\
Let $C$ be a curve in the family and denote as usual the quotient map
by $\psi: C \ra C/G$.
Now take the subgroup $H:=\langle g_{1}\rangle$. The corresponding quotient map $C \ra C/H$  has three critical points, of index two, in $\psi^{-1}(z_{1})$ and a single one, of the same type, in $\psi^{-1}(z_{4})$. By Riemann-Hurwitz formula we see that $E:=C/H$  is an elliptic curve with $H^{0}(E, K_{E})=V_{5}.$ First observe that this curve doesn't move. Moreover we get (9)$\subset$(2e).\\
The group algebra decomposition gives $JC\sim B_{4}\times B_{5}$,
where $B_{5}$ is isogenous to the elliptic curve $E$, that is fixed,
$B_4 \sim P(C,E)$ has dimension 2 and it moves.  This shows that
family (9) is
contained in a fibre of the map $\phi$ of $(2e)$. Hence this fibre of $\phi$ has an irreducible component which is a Shimura subvariety of $\A_3$ of dimension 3. \\
\\
\textbf{(22)}\\
$ G:=\mathbb{Z}/2 \times\mathbb{Z}/4$, the monodromy is generated by
$(g_{3}, g_{2}g_{3}, g_{1}g_{2}, g_{1}g_{3} )$, where $g_{1}=(0,1)$,
$g_{2}=(1,0)$ and $g_{3}=(0,2)$, $m=(2,2,4,4)$. We have
$H^{0}(C, K_{C})\cong V_{3}\oplus V_{7}\oplus V_{8}$ and
$ (S^{2}H^{0}(C, K_{C}))^{G}\cong V_{3}\otimes V_{7}. $ The group
algebra decomposition gives $JC\sim B_{3}\times B_{8}$, with
dim$(B_{3})=2$ and dim$(B_{8})=1$. Consider the subgroup
$H=\langle g_{2}g_{3}\rangle $. One easily checks that the quotient
curve $E=C/H$ has genus one, with $(H^{0}(C,
K_{C}))^{H}=V_{8}$. Therefore $E$ is isogenous to $B_8$ and it remains
fixed.  This proves that family (22) is contained in a fibre of the
map $\phi$ of $(2e)$ This also implies that this fibre has an
irreducible component that is a Shimura subvariety of $\A_3$ of
dimension 3.

We add some details on this family which show that the jacobians of
this family are decomposable as products of elliptic curves.  Take
another subgroup: $H'=\langle g_{2}\rangle$. The quotient map
$C\rightarrow C/H'$ is \'etale so, applying Riemann-Hurwitz, we have
$g(C/H')=2$. As dim($V_{3}^{H'})=1=s_{V_3}$, where $s_{V_3}$ is the
Schur index of $V_3$, and since dim($V_{8}^{H'}$)=0 we apply
\cite[Lemma 1]{j} and we obtain $B_3\sim JC'$.

Set $C/H'=C'$ and consider the degree 4 map
$C'\rightarrow C'/\langle \overline{g_{1}} \rangle \cong \PP^1$, where
$\langle \overline{g_1}\rangle \cong G/H' \cong {\mathbb Z}/4{\mathbb
  Z}$. One easily sees that the family
$C' \rightarrow C'/\langle \overline{g_{1}} \rangle $ coincides with
the family (4) = (29). The group algebra decomposition of the
Jacobians for (29) is $JC'\sim F^{2}$, where $F$ is an elliptic
curve.  Thus we conclude $JC\sim E\times F^{2}$.  \\

Now we analize the families of dimension two ($ N=2 $).\\
\\
\textbf{(31)}\\
$G:=S_{3}=\langle g_{1},g_{2}: g_{1}^{2}=g_{2}^{3}=1,
g_1^{-1}g_{2}g_{1}=g_{2}^{2}\rangle$, with monodromy given by
$(g_{1}g_{2}^{2}, g_{1}g_{2}, g_{1},g_{1}g_{2}^{2}, g_{2}^{2} )$,
$m=(2,2,2,2,3)$. We have $H^{0}(C, K_{C})=V_{2}\oplus V_{3}$, where
$V_2$ has dimension 1, $V_3$ has dimension 2, and
$ (S^{2}H^{0}(C, K_{C}))^{G}=S^{2}V_{2}\oplus(S^{2}V_{3})^{G}. $ The
group algebra decomposition gives $JC\sim B_{2}\times B_{3}^{2}$,
where both terms have dimension one. Consider a curve $C$ of the
family and denote as usual by $\psi: C \ra C/G \cong \PP^1$ the
quotient map.  Denote by $H=\langle g_{2} \rangle \subset G$ and by
$\alpha: C \ra C/H$ the quotient map. We have two critical points of
index three in $\psi^{-1}(z_{5})$, hence $E:=C/\langle g_{2} \rangle$
has genus one and one can show that $H^{0}(E, K_{E})=V_{2}$. Thus
$B_{2}\sim E$ and we also have shown that (31)=(3e).  Finally one can
easily see that $C/\langle g_1 \rangle$ has genus 1 and
$B_{3}\sim J(C/\langle g_{1}\rangle)$. This gives the inclusion
$(31) \subset$ (2e). Notice that both $B_2$ and $B_3$ move, so $(31)$
is not contained in a fibre of the map $\phi$.\\


\noindent
\textbf{(32)}\\
$G=D_{4}=\langle x,y: x^{4}=y^{2}=1, y^{-1}xy=x^{3}\rangle$, monodromy given by $(x^3y,y, x^2, y, xy )$,  $m=(2,2,2,2,2)$.  Moreover $H^{0}(C, K_{C})\cong V_{4}\oplus V_{5}$, where $V_4$ has dimension 1, $V_5$ has dimension 2 and $ (S^{2}H^{0}(C, K_{C}))^{G} \cong S^{2}V_{4}\oplus(S^{2}V_{5})^{G} $. The group algebra decomposition yields $JC\sim B_{4}\times B_{5}^{2}$.\\
Take the subgroup $H=\langle x \rangle$ and consider the quotient map
$ \alpha: C\rightarrow C/H$. We get four critical points in
$\psi^{-1}(z_{3})$ of index two, hence $g(C/H)=1$. This shows the
inclusion (32)$\subset$(4e).
Consider the subgroup $H'=\langle x^{2} \rangle$ of $H$. We can factor the degree four map $\alpha$ into two maps of degree two. The map $\alpha': C\rightarrow C/H'$ has four critical points of index two in $\psi^{-1}(z_{3})$, hence $ C/H'$ has genus 1 and is isogenous to $C/H$. This gives the inclusion (32)$\subset$(2e). We have $H^{1,0}(C/H)  \cong V_{4}$, therefore $C/H\sim C/H' \sim B_{4}$.\\
Consider the subgroup $K=\langle y \rangle$. One immediately checks
that $C/K$ has genus 1 and $H^{1,0}(C/K)\subseteq V_{5}$, hence
$B_{5}\sim C/K$. So both $B_4$ and $B_5$ move, hence $(32)$ is not
contained in a fibre of the map $\phi$.\\

Let's now describe the only family of dimension $N=3$. \\

\noindent
\textbf{(27)}\\
$G=\mathbb{Z}/2\times\mathbb{Z}/2=\langle g_{1},g_{2}: g_{1}^{2}=g_{2}^{2}=1 \rangle $, monodromy: $(g_{2},g_{1}g_{2}, g_{1}, g_{1}g_{2}, g_{1},g_{2} )$, $H^{0}(C, K_{C})\cong V_{2}\oplus V_{3}\oplus V_{4}$, $ (S^{2}H^{0}(C, K_{C}))^{G} \cong S^{2}V_{2}\oplus S^{2}V_{3}\oplus S^{2}V_{4}. $ The Jacobian decomposes up to isogeny as $JC\sim B_{2}\times B_{3}\times B_{4}$, where $B_i$'s are three different elliptic curves. \\
One easily checks that $B_{2}\sim C/H$, $B_{3}\sim C/H'$ and finally $B_{4}\sim C/H''$, where  $H=\langle g_{2} \rangle$, $H'=\langle g_{1} \rangle$ and $H''=\langle g_{1}g_2 \rangle$. This gives the inclusion $(27) \subset$ (2e). All the three elliptic curves move, so $(27)$ is not a fibre of the map $\phi$.\\
\qed \\

This ends up the discussion of inclusions in genus 3.  In genus 4 we
have the following diagram of inclusions.
\begin{equation*}
  \begin{tikzcd}[column sep=small]
    &      (10) &  & (6\mathrm{e}) &  & \\
    &      (14) \arrow {u} & & & &    \\
    (11) & (13)=(24) \arrow {u} & (25)=(38) \arrow {ul} \arrow{uur} &
    (12) \arrow{uu} & (37) \arrow{uul} & (36)
  \end{tikzcd}
\end{equation*}
Families in the lowest line are one-dimensional, (14) has dimension 2,
while (10) and (6e) have dimension 3.

\begin{prop}
  Family (12) is contained in a fibre of the Prym map of (6e), while
  family $(25) = (38)$ is contained in a fibre of the map $\phi$ of
  (6e). Family (37) is not contained in any fibre of the Prym map nor
  of the map $\phi$ of (6e).
\end{prop}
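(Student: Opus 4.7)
The plan is to imitate the strategy of Proposition~\ref{sskk} family by family. Since family (6e) has $G = \mathbb{Z}/3$ with $g'=1$, for each of (12), (25)=(38), (37) I would first exhibit a subgroup $H$ of the corresponding automorphism group such that $H \cong \mathbb{Z}/3$ and the quotient $C/H$ has genus $1$; a Riemann--Hurwitz count on the known monodromy data will establish that such an $H$ exists and that the induced quotient map realises the inclusion of the family inside (6e). Then, setting $E := C/H$, the two relevant invariants become $\phi([C]) = [E]$ (up to isogeny one of the group-algebra factors of $JC$) and $\prym([C]) = [P(C,E)]$ (isogenous to the complementary factor).

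With the inclusion in hand, I would write down the isotypic decomposition of $H^0(K_C)$ and the resulting group-algebra decomposition $JC \sim B_{i_1} \times \cdots \times B_{i_k}$ for each family, exactly as done in the genus 3 analysis. The key point is to determine which factors are forced to be constant by the action of the full group $G$ of the sub-family and which are free to move in the $1$-parameter moduli. For family (12) I expect the $\mathbb{Z}/3$-isotypic part carrying the Prym factor to be fixed by the larger automorphism group of (12), while the elliptic part $E$ varies — this places (12) in a fibre of $\prym$. For family (25)=(38) I expect the reverse: the elliptic quotient $C/H$ is forced to be a fixed isomorphism class (typically with extra automorphisms, such as a CM elliptic curve), while the Prym factor moves — this places (25)=(38) in a fibre of $\phi$. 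For family (37) the group action will not force either factor to be constant, so along the $1$-parameter family both $[E]$ and $[P(C,E)]$ vary, ruling out containment in any fibre of $\prym$ or $\phi$.

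The main obstacle is the bookkeeping: one must choose the right subgroup $H$ for each family, compute the fixed subspaces $H^0(K_C)^H$ in terms of the characters of $G$, and match these with the group-algebra factors to see which are rigid and which move. These are finite, routine computations but they are error-prone and are most safely carried out via \texttt{MAGMA} as in \cite{fpp}, using the explicit monodromy data listed for families (12), (25)=(38), (37) in Tables 1--2 of \cite{fgp}. Once the decomposition and the rigidity pattern are established, the three conclusions follow immediately from the defining properties of $\prym$ and $\phi$.
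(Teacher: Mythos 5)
Your proposal is correct and follows essentially the same route as the paper: for each family one exhibits a $\mathbb{Z}/3$ subgroup $H$ with genus-one quotient $E=C/H$ to get the inclusion into (6e), then uses the isotypic/group-algebra decomposition of $JC$ to decide which factors are rigid, and your predicted rigidity pattern (Prym factor fixed for (12), elliptic quotient fixed for (25)=(38), neither fixed for (37)) is exactly what the paper's computations confirm. The only content you leave implicit is the explicit identification of the fixed factors (e.g.\ $P(C,E)\sim B_3\times B_6$ for (12), $C/H\sim B_8$ for (38), and $P(C,E)\sim B_2\times B_4^2$ moving for (37)), which the paper carries out by hand from the monodromy data rather than by machine.
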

\proof Let us start by considering family \textbf{(12)}:
\begin{gather*}
  G=\mathbb{Z}/6=\langle g_{1}g_{2}: g_{1}^{2}=g_{2}^{3}=1 \rangle, x=(g_{1},g_{1}g_{2},g_{1}g_{2},g_{1}g_{2}),\\
  m = (2,6,6,6), \quad H^{0}(C, K_{C}) \cong V_{2}\oplus V_{3}\oplus
  2V_{6}, \quad (S^{2}H^{0}(C, K_{C}))^{G} \cong S^{2}V_{2}.
\end{gather*}

\noindent
The group algebra decomposition gives
$JC\sim B_{2}\times B_{3}\times B_{6}$, where the first two terms have
dimension one while the third one has dimension equal to two.
Consider the subgroup $H:=\langle g_{2}\rangle\cong \mathbb{Z}/3$. Call $z_{1},z_{2},z_{3},z_{4}$ the branch points for the map $\psi: C \rightarrow C/G$ and consider the quotient map $f:  C \rightarrow C/H$.  There are  three critical points for $f$, of order three, in the preimage $\psi^{-1}(z_{i}), i=2,3,4$. Applying Riemann-Hurwitz formula we get that the genus of $E:=C/H$ is one. This gives us the inclusion (12)$\subset$(6e). Moreover, since  $H^{0}(C, K_{C})^{H}=V_{2}$, we get $B_{2}=E$.\\
Consider $H':=\langle g_{1}\rangle\cong \mathbb{Z}/2$. Each critical point for $\psi$ is also critical of order two for the quotient map $ C \rightarrow C/H'$. Riemann-Hurwitz formula implies that $C/H'$ is an elliptic curve. We have $H^{0}(C/H', K_{C/H'})=V_{3}$ and thus $B_{3} \sim C/H'$.\\
Notice that the terms $B_{3}$ and $ B_{6} $ don't move and their product is isogenous to the Prym variety $P(C,E)$. Thus, as already observed in \cite{gm2}, (12) is contained in a fibre of the Prym map of (6e).\\

\noindent
\textbf{(25)=(38)}\\
$G=\mathbb{Z}/3\times S_{3}$ with generators $g_{1}=([0]_3,(12))$,
$g_{2}=(1,(1))$ and $g_{3}=([0]_3,(123))$ and monodromy
$(g_{1}g_{3}^{2},g_{1}g_{3},g_{2}g_{3},g_{2}^{2})$, $m=(2,2,3,3)$. We
know that $H^{0}(C, K_{C})\cong V_{3}\oplus V_{4}\oplus V_{8}$ and
that $ (S^{2}H^{0}(C, K_{C}))^{G} \cong V_{3}\otimes V_{4}.$ The first
two $V_i$'s have dimension one while $V_{8}$ has dimension equal to
two.
The Jacobian decomposes as $JC\sim B_{3}\times B_{8}^{2}$, the first term is 2-dimensional while the second is 1-dimensional. \\
Set $H:=\langle g_{2}g_{3}\rangle \cong {\mathbb Z}/3$ and consider the quotient map $f: C\rightarrow C/H$.  We get three critical points of $f$ of order three all contained in $\psi^{-1}(z_{3})$. Hence  $g(C/H)=1$ and we also see the inclusion (38)$\subset$(6e). Moreover $H$ fixes a one-dimensional subspace of $V_{8}$, thus we get $C/H\sim B_{8}$. Note that this term of the decomposition doesn't move. This implies that family (38) is contained in a fibre of $\phi$ of (6e). Thus this fibre determines a Shimura subvariety of $\A_4$ of dimension 2.\\
Now take the quotient for $H'=\langle g_{3}\rangle \cong {\mathbb Z}/3$. The correspondent quotient map is \'etale. Therefore we obtain $g(C/H')=2$. Due to the fact that dim($V_{3}^{H'}$)=1=$s_{V_{_{3}}}$, where $s_{V_{_{3}}}$ is the Schur index of $V_3$, and since dim($V_{8}^{H'}$)=0, there exists an isogeny between $J(C/H')$ and $B_{3}$ (see \cite[Lemma 1]{j}). Since $H'$ is normal in $G$ we can now look at the map $C/H'\rightarrow \mathbb{P}^{1}$. This is a Galois covering with Galois group $G/H'\cong\mathbb{Z}/6$ and $m=(2,2,3,3)$. Actually we have only one one-dimensional family with this datum and it corresponds to family (5) of \cite{fgp}. Using the group algebra decomposition on the Jacobian of this family we get $ B_{3}\sim J(C/H')\sim E^{2}$, where $E$ is an elliptic curve. Since $E$ moves, family (38) is not contained in  a fibre of the Prym map of (6e). \\

\noindent
\textbf{(37)}\\
$G=A_{4} $, the generators for the monodromy are
$(g_{3},g_{1}g_{3},g_{1},g_{1}g_{2}g_{3})$, where $g_{1}=(123)$, $g_{2}=(12)(34)$ and  $g_{3}=(13)(24)$, $m=(2,3,3,3)$. Moreover we have $H^{0}(C, K_{C}) \cong V_{2}\oplus V_{4}$, where $ V_{2} $ has dimension one and $ V_{4} $ has dimension equal to three and $ (S^{2}H^{0}(C, K_{C}))^{G} \cong (S^{2}V_{4})^{G}$. The Jacobian decomposes completely as $JC\sim B_{2}\times B_{4}^{3}$.\\
Take the quotient $\psi:C\rightarrow C/G $ and call, as usual, the branch points $z_{i}, \ i=1,2,3,4$. Now we consider the subgroup generated by $\langle g_{1}\rangle$. It is a cyclic group of order three. Studying the map $C\rightarrow C/\langle g_{1}\rangle:=E$ we get three critical points, of order three, respectively in the fibres $\psi(z_{i})^{-1}, i=2,3,4$ and so we have $g(E)=1$ (and also the inclusion in the family (6e)). Moreover $ \langle g_{1}\rangle $ fixes a one-dimensional subspace of $V_{4}$ and thus $ E\sim B_{4}$. Note that $E$ moves. \\
If we consider the subgroup $H:=\langle g_{2},g_{3} \rangle$ and its
associated quotient map $f:C\rightarrow C/H$, all the critical points
in the fibre of $z_{1}$ are critical points of order two for
$f$. Thanks to Riemann-Hurwitz formula we see that the curve $F:=C/H$
is elliptic. Since Fix($H$)=$V_{2}$ we obtain $F\sim B_{2}$. Although
this curve remains constant for the family, we have that
$P(C,E)\sim B_{2}\times B_{4}^{2}$, hence it moves. Thus (37) is
neither contained in a fibre of the Prym map of (6e) nor in a fibre of
the map $\phi$ of (6e).

\qed

\end{document}